\newtheorem{thm}{Theorem}
\newtheorem{lem}[thm]{Lemma}
\newtheorem{cor}[thm]{Corollary}
\newtheorem{prop}[thm]{Proposition}
\theoremstyle{definition}
\theoremstyle{definition}
\newcommand{\R}{\mathbb{R}^{d}}
\title[Gradient Bounds and Coarsening]{Gradient Bounds and Coarsening Estimates for Some Molecular Beam Epitaxy Models}
\begin{document}
\author{Daniel Oliveira da Silva}
\address{Department of Mathematics \\
Nazarbayev University \\
Qabanbai Batyr Avenue 53 \\
010000 Nur-Sultan \\
Republic of Kazakhstan}
\email{daniel.dasilva@nu.edu.kz}
\author{Magzhan Biyar}
\address{Department of Mathematics \\
Nazarbayev University \\
Qabanbai Batyr Avenue 53 \\
010000 Nur-Sultan \\
Republic of Kazakhstan}
\email{magzhan.biyarov@nu.edu.kz}

\begin{abstract}
  We prove global well-posedness for some molecular beam epitaxy models.  This requires proving some bounds on the gradient of solutions.  As a consequence of these estimates, we also obtain bounds on the coarseness of solutions.
\end{abstract}

\keywords{Molecular beam epitaxy; Cahn-Hilliard; well-posedness; roughness}
\subjclass[2010]{35K25; 35K58; 35Q81}

\maketitle

\section{Introduction}

Molecular beam epitaxy (MBE) is a method of depositing very thin layers of crystals onto surfaces such that the orientation of the crystals relative to the substrate is well-defined.  Such manufacturing methods are used in the fabrication of various types of objects, such as optical components and semiconductor devices.  In many models of epitaxial dynamics, the profile of the thin film is described by a partial differential equation of the general form
\begin{equation}\label{MBE}
u_{t} + \Delta^{2} u + \nabla \cdot \mathbf{J}(\nabla u) = 0.
\end{equation}
Here, the function $u : \Omega \times \mathbb{R} \longrightarrow \mathbb{R}$ is called the \emph{height function}, and represents the height of the thin film layer in a comoving reference frame.  The set $\Omega \subset \mathbb{R}^{2}$ represents the surface being coated, and the vector function $\mathbf{J}$  is called the \emph{surface diffusion current}.  This equation is similar to the Cahn-Hilliard equation
\[
u_{t} + \Delta^{2} u + \Delta F(u) = 0.
\]
However, it is known that there are differences in the behavior of solutions to these two equations; see, for example, \cite{SP1994, S1997, S1998} for discussions.

In the modeling of epitaxial processes with equations such as \eqref{MBE}, there are two questions of fundamental importance:
\begin{itemize}
\item Well-posedness: Do unique solutions to \eqref{MBE} exist for a given class of initial data?
\item Coarsening: How much do solutions to equation \eqref{MBE} deviate from the baseline?
\end{itemize}
Typically, the coarseness $C(t)$ is taken to be the root-mean-square deviation from the baseline and is measured by the $L^{2}$ norm of the solution; see the discussion in section 3 of \cite{B2020} for more details about coarseness in deterministic MBE models, or the introduction in \cite{BMW2001} for stochastic models.

In the present work, we will consider these questions for models of the form of equation \eqref{MBE} for surface currents $\mathbf{J}$ satisfying
\[
\mathbf{J}(\nabla u) = O(|\nabla u|^{q})
\]
for $q \geq 1$.  This class contains many well-known models, such as the model of Rost and Krug \cite{RK1997} (also some special cases of the model of Villain \cite{V1991}),
\[
\mathbf{J}(\nabla u) = \left( 1 - |\nabla u|^{2} \right)\nabla u;
\]
and the models of Johnson, Orme, Hunt, Graff, Sudijono, Sander, and Orr \cite{JOHGSSO1994} and Siegert and Plischke \cite{SP1994}, where $\mathbf{J}$ is given by
\[
\mathbf{J}(\nabla u) = (u_{x}f(u_{x}), u_{y}f(u_{y}))
\]
for continuous rational functions $f$.  For such models, we will prove the following global well-posedness result:
\begin{thm}\label{mainthm1}
Let $1 \geq p$, let $\mathbf{J}$ be a differentiable function such that $\mathbf{J}(\nabla u) = O( | \nabla u |^{q})$.  Then there exists a number $\epsilon_{0} > 0$ such that the Cauchy problem
\begin{equation}\begin{aligned}\label{Cauchy}
& u_{t} + \Delta^2 u + \nabla \cdot \mathbf{J}(\nabla u) = 0 \\
& u(0) = u_0.
\end{aligned}\end{equation}
is globally well-posed in $W^{1,p} \cap W^{1,\infty}(\R)$ for small initial data whenever
\[
p = \frac{d(q-1)}{2} \quad \text{and} \quad q > \frac{5}{3}.
\]
\end{thm}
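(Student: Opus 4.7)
The strategy is a contraction-mapping argument applied to the Duhamel form of \eqref{Cauchy} in a Banach space tailored to the scaling of the equation. Writing the mild formulation
\[
u(t)=e^{-t\Delta^{2}}u_{0}-\int_{0}^{t}e^{-(t-s)\Delta^{2}}\nabla\cdot\mathbf{J}(\nabla u(s))\,ds,
\]
and integrating by parts inside the Duhamel integral so that the nonlinearity appears beneath a second-order derivative of the semigroup, one reduces the whole analysis to the standard smoothing estimate
\[
\|\partial^{\alpha}e^{-t\Delta^{2}}f\|_{L^{r}}\lesssim t^{-|\alpha|/4-(d/4)(1/\sigma-1/r)}\|f\|_{L^{\sigma}},\qquad 1\leq\sigma\leq r\leq\infty,
\]
combined with a Lebesgue estimate of $\mathbf{J}(\nabla u)$.

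The natural resolution space carries the norms of $\nabla u$ in $L^{p}$ and in $L^{\infty}$ simultaneously. I would set
\[
\|u\|_{X_{T}}=\sup_{0<t<T}\|\nabla u(t)\|_{L^{p}}+\sup_{0<t<T}t^{\theta}\|\nabla u(t)\|_{L^{\infty}},\qquad \theta=\tfrac{1}{2(q-1)},
\]
with $\theta$ dictated by the scaling $u_{\lambda}(x,t)=\lambda^{(3-q)/(q-1)}u(\lambda x,\lambda^{4}t)$ of the equation, so that both summands are scale-invariant. The exponent $p=d(q-1)/2$ is precisely the value that makes $\|\nabla u_{0}\|_{L^{p}}$ scale-invariant, so the initial data space matches the resolution space. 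A direct application of the semigroup estimates bounds the linear part, $\|e^{-t\Delta^{2}}u_{0}\|_{X_{T}}\lesssim\|u_{0}\|_{W^{1,p}\cap W^{1,\infty}}$.

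The heart of the argument is the nonlinear estimate. Using $|\mathbf{J}(\nabla u)|\lesssim|\nabla u|^{q}$ and the interpolation
\[
\|\mathbf{J}(\nabla u)\|_{L^{r}}\lesssim\|\nabla u\|_{L^{\infty}}^{q-1}\|\nabla u\|_{L^{r}},
\]
with $r$ chosen according to which piece of $X_{T}$ is being estimated, one is reduced to controlling time integrals of the form $\int_{0}^{t}(t-s)^{-\beta}s^{-(q-1)\theta}\,ds$. Because the scaling choice forces $(q-1)\theta=1/2$, these reduce to $t$-independent Beta functions, producing $\|N(u)\|_{X_{T}}\lesssim\|u\|_{X_{T}}^{q}$ together with the corresponding Lipschitz bound. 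For $\|u_{0}\|_{W^{1,p}\cap W^{1,\infty}}<\epsilon_{0}$ sufficiently small, the Duhamel map is then a contraction on a small ball of $X_{T}$, and because every constant in the argument is independent of $T$, one can pass to the limit $T\to\infty$ to obtain the global solution.

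The main obstacle I expect lies in the endpoint $L^{\infty}$ estimate for $\nabla u$: the semigroup smoothing is borderline at $r=\infty$, so that piece of $X_{T}$ must be handled either by splitting the Duhamel integral into regions of small and large $t-s$ or by working through an intermediate Lebesgue exponent $r^{*}$ and interpolating back. Convergence of the Beta-type time integrals, together with the standing requirement $p\geq 1$ for the underlying $L^{p}$ theory, is exactly where the hypothesis $q>5/3$ is consumed: this is the dimensional threshold at which the integrability conditions on $(t-s)^{-\beta}$ and $s^{-(q-1)\theta}$ coexist with $p=d(q-1)/2\geq 1$.
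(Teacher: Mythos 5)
Your scale-invariant contraction scheme for the gradient is sound as far as it goes, and it is essentially a repackaging of what the paper does in two steps (local iteration in $W^{1,p}\cap W^{1,\infty}$, then an a priori bound on $M=\sup_t\{\|\nabla u(t)\|_{L^p}+t^{d/4p}\|\nabla u(t)\|_{L^\infty}\}$ closed by Strauss's lemma); note your weight $\theta=\tfrac{1}{2(q-1)}$ coincides with the paper's $\tfrac{d}{4p}$ when $p=\tfrac{d(q-1)}{2}$. But there is a genuine gap: your resolution space $X_T$ only controls $\nabla u$, while the theorem asserts global well-posedness in $W^{1,p}\cap W^{1,\infty}$, i.e.\ you must also show that $u$ itself stays in $L^p\cap L^\infty$ on every finite time interval (this is exactly the blow-up criterion the paper must exclude). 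Your proposal never estimates $\|u(t)\|_{L^p}$ or $\|u(t)\|_{L^\infty}$, and no contraction in a gradient-only space can supply this, because these norms are not scale-invariant and in fact grow in time: the paper obtains $\|u(t)\|_{L^p}\lesssim (1+t)^{1/4}$ and $\|u(t)\|_{L^\infty}\lesssim \|u_0\|_{W^{1,\infty}}+t^{3/4-d/4p}$ from the Duhamel formula, the kernel bound $\|\nabla k_{t-s}\|_{L^{p'}}\lesssim (t-s)^{-\frac{d}{4p}-\frac14}$, and the interpolated gradient decay $\|\nabla u(s)\|_{L^{pq}}\lesssim s^{-\frac{d}{4p}(1-\frac1q)}$.

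This also shows that your accounting of where $q>5/3$ is consumed is wrong. The Beta-type integrals in the gradient estimates converge for any $q$ with $(q-1)\theta=\tfrac12$, and the constraint $p\ge 1$ reads $q\ge 1+\tfrac{2}{d}$, which does not reproduce $5/3$ except when $d=3$. The hypothesis $q>5/3$ enters only in the estimate of $u$ itself in $L^\infty$: convergence of $\int_0^t (t-s)^{-\frac{d}{4p}-\frac14}s^{-\frac12}\,ds$ requires $\tfrac{d}{4p}+\tfrac14<1$, i.e.\ $p>\tfrac d3$, i.e.\ $q>\tfrac53$. To complete your argument you would need to append this persistence step (and, for the continuity of the data-to-solution map into $C([0,\infty);W^{1,p}\cap W^{1,\infty})$, the corresponding difference estimates on $u$, not just on $\nabla u$); with that addition your contraction route would be an acceptable alternative to the paper's iteration-plus-bootstrap, but as written it proves only a global theory for the gradient.
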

As we will see, this theorem is stated in a slightly stronger form than necessary; the smallness assumption applies only to the gradient of solutions, and not to the solution itself.

To prove this result, we first obtain a local well-posedness result in section \ref{localwell}.  We will then show that it is possible to control the norm of the solutions using the following a priori estimate on the gradient of solutions:
\begin{thm}\label{mainthm2}
Let $u$ be the solution to the Cauchy problem
\begin{align*}
& u_{t} + \Delta^{2} u + \nabla \mathbf{J}(\nabla u) = 0 \\
& u(x,0) = u_{0}
\end{align*}
with $\mathbf{J}(\nabla u) = O( |\nabla u|^{q} )$.  Then there exists a constant $\epsilon_{0} > 0$ such that if
\[
\| \nabla u_{0} \|_{L^{p}} \leq \epsilon_{0},
\]
then
\[
\| \nabla u(t) \|_{L^{\infty}_{x}} \leq \frac{C_{1}}{t^{\frac{d}{4p}}}.
\]
and
\[
\| \nabla u(t) \|_{L^{p}_{x}} \leq C_{2}.
\]
for some positive constants $C_{1}, C_{2}$, and for
\[
p = \frac{d(q-1)}{2}.
\]
\end{thm}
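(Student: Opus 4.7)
My plan is to close an a priori estimate in the scaling-critical mixed norm
\[
N(T) := \sup_{0 \leq t \leq T}\Bigl( \|\nabla u(t)\|_{L^{p}} + t^{d/(4p)}\|\nabla u(t)\|_{L^{\infty}}\Bigr)
\]
by a standard bootstrap argument. Writing the equation in Duhamel form,
\[
u(t) = e^{-t\Delta^{2}} u_{0} - \int_{0}^{t} e^{-(t-s)\Delta^{2}}\, \nabla\cdot \mathbf{J}(\nabla u(s))\,ds,
\]
and applying the standard smoothing estimates for the biharmonic heat semigroup,
\[
\bigl\|D^{k} e^{-t\Delta^{2}} f\bigr\|_{L^{r}} \leq C\, t^{-k/4 - (d/4)(1/m-1/r)} \|f\|_{L^{m}}, \qquad 1 \leq m \leq r \leq \infty,
\]
the linear contribution $e^{-t\Delta^{2}}\nabla u_{0}$ immediately yields the bounds $\|\nabla u_{0}\|_{L^{p}} \leq \epsilon_{0}$ in $L^{p}$ and $\epsilon_{0}\, t^{-d/(4p)}$ in $L^{\infty}$, which is precisely what the definition of $N(T)$ demands.

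For the nonlinear piece, the growth hypothesis $\mathbf{J}(\nabla u) = O(|\nabla u|^{q})$ combined with log-convexity of $L^{r}$ norms gives, for any $m$ with $qm \geq p$,
\[
\|\mathbf{J}(\nabla u(s))\|_{L^{m}} \leq C\,\|\nabla u(s)\|_{L^{qm}}^{q} \leq C\, N(T)^{q}\, s^{-(d/4)(q/p-1/m)}.
\]
Applying the smoothing estimate with $k = 2$ (one derivative for the outer gradient, one for the $\nabla\cdot$ inside the integral) reduces the nonlinear term in $L^{r}$ to
\[
C\, N(T)^{q}\int_{0}^{t} (t-s)^{-\alpha} s^{-\beta}\, ds, \qquad \alpha = \tfrac{1}{2} + \tfrac{d}{4}\!\left(\tfrac{1}{m}-\tfrac{1}{r}\right),\quad \beta = \tfrac{d}{4}\!\left(\tfrac{q}{p}-\tfrac{1}{m}\right).
\]
A direct computation shows that the scaling relation $p = d(q-1)/2$ is equivalent to the identity $1 - \alpha - \beta = -(d/4)(1/p-1/r)$, so the Beta integral $t^{1-\alpha-\beta}B(1-\alpha,1-\beta)$ produces exactly the self-similar time weight matching the linear contribution, with no extraneous power of $t$ remaining.

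The main obstacle is not algebraic but the admissibility of the exponents: one needs $\alpha, \beta < 1$ for the Beta integral to converge, which forces $m > d/2$ (from the $r = \infty$ estimate) together with $q/(2(q-1)) < 1 + d/(4m)$ (for $\beta < 1$ in the range $q < 2$). These constraints, together with the requirement $p \geq 1$ in dimension $d = 3$, are precisely what pin down the threshold $q > 5/3$ in the hypothesis. Carrying out the estimate for both $r = p$ and $r = \infty$ and combining yields
\[
N(T) \leq A\epsilon_{0} + B\, N(T)^{q}
\]
uniformly in $T$. Since $q > 1$ and $N$ is continuous in $T$ with $N(0) \leq \epsilon_{0}$, a standard continuity argument shows that if $\epsilon_{0}$ is sufficiently small then $N(T) \leq 2A\epsilon_{0}$ for all $T$, which is exactly the pair of bounds asserted in the theorem.
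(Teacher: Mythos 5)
Your proposal is correct and follows the same overall strategy as the paper: the same weighted quantity $\sup_t\bigl(\|\nabla u(t)\|_{L^{p}}+t^{d/(4p)}\|\nabla u(t)\|_{L^{\infty}}\bigr)$, the Duhamel representation with the biharmonic kernel smoothing bounds (Lemma \ref{kernel}), the Beta-integral computation (Lemma \ref{beta}) in which the relation $p=d(q-1)/2$ kills the extraneous power of $t$, and closure of $N\leq A\epsilon_{0}+BN^{q}$ by a small-data bootstrap (your continuity argument plays the role of Lemma \ref{strauss}). The one genuine difference is your free intermediate exponent $m$: the paper estimates the nonlinearity in $L^{\infty}$ (i.e.\ $m=\infty$) for the $L^{\infty}$ gradient bound, which produces the singular factor $s^{-dq/(4p)}=s^{-q/(2(q-1))}$ and hence needs $q>2$ for Lemma \ref{beta} to apply, whereas your interpolated bound $\|\mathbf{J}(\nabla u)\|_{L^{m}}\lesssim N^{q}s^{-\frac{d}{4}(\frac{q}{p}-\frac{1}{m})}$ with $d/2<m<\frac{d(q-1)}{2(2-q)}$ makes both Beta exponents admissible for all $q>3/2$, so your version is actually more robust on the range $5/3<q<2$ covered by Theorem \ref{mainthm1}. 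One small inaccuracy: the threshold $q>5/3$ is not pinned down by these admissibility constraints (nor by $p\geq 1$ in $d=3$); in the paper it arises later, in the global argument of Section \ref{globalwell}, from requiring $p>d/3$ when estimating $\|u(t)\|_{L^{\infty}}$ — this does not affect the validity of your proof of the gradient bounds themselves.
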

\noindent This will be proved in section \ref{gradboundsec}.  We remark that this result should be compared to the result of Li, Qiao, and Tang in \cite{LQT2017}, who obtained a similar result for the problem on the torus.  Once we prove Theorem \ref{mainthm2}, we may obtain the necessary control on the norms of the solution.  This will be done in section \ref{globalwell}.  

Our final main result gives us a maximum rate of growth on the coarseness of solutions:
\begin{thm}\label{mainthm3}
Let $u \in C([0,\infty); W^{1,p}(\R) \cap W^{1,\infty}(\R))$ be the global solution to the Cauchy problem \eqref{Cauchy} from Theorem \ref{mainthm1} for
\[
\max \left\{ 1 + \frac{2}{d}, \frac{6+d}{2+d} \right\} < q < 1 + \frac{4}{d}.
\]
Then the coarseness $$C(t) = \| u(t) \|_{L^{2}}$$ of $u$ satisfies
\[
C(t) \leq C(1+t)^{\frac{1}{4} - \frac{d}{4p} + \frac{d}{8}}.
\]
for all $t \in [0, \infty)$.
\end{thm}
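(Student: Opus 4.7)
The natural approach is a direct energy estimate. Multiplying equation \eqref{Cauchy} by $u$ and integrating by parts on $\R$ gives the energy identity
\[
\frac{1}{2} \frac{d}{dt} \|u(t)\|_{L^2}^2 + \|\Delta u(t)\|_{L^2}^2 = \int_{\R} \nabla u \cdot \mathbf{J}(\nabla u) \, dx.
\]
The dissipative term on the left has a favorable sign and I simply discard it. Using the hypothesis $\mathbf{J}(\nabla u) = O(|\nabla u|^{q})$ pointwise, the right-hand side is controlled by $C\|\nabla u(t)\|_{L^{q+1}}^{q+1}$, so that
\[
\frac{d}{dt} \|u(t)\|_{L^2}^2 \;\lesssim\; \|\nabla u(t)\|_{L^{q+1}}^{q+1}.
\]

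To invoke Theorem~\ref{mainthm2}, I interpolate $L^{q+1}$ between $L^{p}$ and $L^{\infty}$:
\[
\|\nabla u\|_{L^{q+1}}^{q+1} \;\leq\; \|\nabla u\|_{L^{p}}^{p}\,\|\nabla u\|_{L^{\infty}}^{q+1-p} \;\leq\; C\, t^{-\alpha},
\]
where $\alpha = d(q+1-p)/(4p)$. Substituting $p = d(q-1)/2$ reduces this to $\alpha = (q+1)/(2(q-1)) - d/4$, and a direct check shows that the lower bound $q > (6+d)/(2+d)$ is exactly the condition $\alpha < 1$ needed for integrability of $s^{-\alpha}$ at the origin.

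Integrating in time then yields
\[
\|u(t)\|_{L^2}^2 \;\leq\; \|u_{0}\|_{L^2}^2 + \frac{C\, t^{1-\alpha}}{1-\alpha}.
\]
A short algebraic simplification gives $1-\alpha = \tfrac{1}{2} - \tfrac{1}{q-1} + \tfrac{d}{4} = 2\bigl(\tfrac{1}{4} - \tfrac{d}{4p} + \tfrac{d}{8}\bigr)$, matching twice the desired exponent. The initial norm $\|u_{0}\|_{L^2}$ is controlled by the upper bound: since $q < 1 + 4/d$ forces $p < 2$ and $u_{0} \in L^{p} \cap L^{\infty}$, the elementary interpolation $\|u_{0}\|_{L^2} \leq \|u_{0}\|_{L^{p}}^{p/2}\|u_{0}\|_{L^{\infty}}^{1-p/2}$ is finite. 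Taking square roots and absorbing the constant term into a factor $(1+t)^{1/4 - d/(4p) + d/8}$ produces the stated bound.

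The main delicate step is really the algebraic bookkeeping: the interval $\max\{1+2/d,\,(6+d)/(2+d)\} < q < 1+4/d$ must be verified to correspond exactly to the regime where the nonlinear time integral converges (lower bound), $\|u_{0}\|_{L^2}$ is finite (upper bound), and Theorem~\ref{mainthm2} applies. Once this reduction is in place, the argument needs nothing beyond the a priori gradient bounds already at hand, with no further semigroup estimates or fixed-point arguments.
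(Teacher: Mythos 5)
Your argument takes a genuinely different route from the paper. At this stage the paper never touches the equation again: it applies the Gagliardo--Nirenberg inequality $\| u \|_{L^{2}} \leq C \| u \|_{L^{p}}^{1-\theta} \| \nabla u \|_{L^{p}}^{\theta}$ with $\theta = d\left(\tfrac{1}{p} - \tfrac{1}{2}\right)$ and feeds in two bounds already in hand: $\| u(t) \|_{L^{p}} \leq C(1+t)^{1/4}$, proved in the global well-posedness section from the integral equation, and $\| \nabla u(t) \|_{L^{p}} \leq C$ from Theorem \ref{mainthm2}; the requirement $0 < \theta < 1$ with $1 < p < 2$ is exactly where the window $\max\{1+2/d,\,(6+d)/(2+d)\} < q < 1+4/d$ enters. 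You instead differentiate $\|u(t)\|_{L^2}^2$ along the flow, discard the dissipation, and integrate the decay of $\|\nabla u(t)\|_{L^{q+1}}^{q+1}$ in time. Your bookkeeping is correct: with $p = d(q-1)/2$ one has $\alpha = \tfrac{q+1}{2(q-1)} - \tfrac{d}{4}$, the condition $\alpha < 1$ is equivalent to $q > (6+d)/(2+d)$, $1-\alpha$ is twice the stated exponent, and the interpolation $\|u_0\|_{L^2} \leq \|u_0\|_{L^p}^{p/2}\|u_0\|_{L^\infty}^{1-p/2}$ is valid since $p < 2$. What each approach buys: yours is independent of the $(1+t)^{1/4}$ growth estimate on $\|u(t)\|_{L^p}$, while the paper's is a pure interpolation of norms it already controls and so never needs to manipulate the PDE itself.

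The one genuine gap is the opening energy identity. The object you are handed is only a strong (mild) solution in $C([0,\infty); W^{1,p}\cap W^{1,\infty})$; nothing in the paper gives $\Delta u(t) \in L^{2}$, differentiability of $t \mapsto \|u(t)\|_{L^2}^2$, or the decay at infinity needed to integrate by parts twice and to move the divergence off $\mathbf{J}(\nabla u)$. (Finiteness of $\|u(t)\|_{L^2}$ is not the issue, since $L^{p}\cap L^{\infty}\subset L^{2}$ for $p<2$; the identity needs strictly more regularity.) As written, your first display is therefore formal. It can be repaired --- for instance by a smoothing estimate for the semigroup generated by $-\Delta^2$ showing that $u(t)$ gains derivatives for $t>0$, or by establishing the identity for regularized data and passing to the limit via the continuity of the solution map --- but some such justification must be supplied, and avoiding exactly this step is what the paper's interpolation argument is designed to do.
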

This will be shown in section \ref{rough}.  We remark that in the special case $d = 2$ (which is the one of greatest interest in applications), the above result reduces to
\[
\| u(t) \|_{L^{2}} \leq C(1+t)^{\frac{1}{2} - \frac{1}{p}}
\]
when $2 < q < 3$.

\section{Preliminaries}\label{prel}

In this section, we state some preliminary results which will be used in the proofs of Theorems \ref{mainthm1}, \ref{mainthm2} and \ref{mainthm3}.  First, we recall some basic facts about Cahn-Hilliard-type equations.  Consider the problem
\begin{equation}\label{cahnhilliard}\begin{aligned}
& u_{t} + \Delta^2 u = G(x,t), \\
& u(x,0) = f(x),
\end{aligned}
\end{equation}
with $(x,t) \in \mathbb{R}^{d} \times \mathbb{R}_{+}$.  By elementary Fourier analysis, the solution is given by
\begin{equation}\label{solution}
u(x,t) = \int_{\R} k_{t}(x-y) f(y)\ dy + \int_{0}^{t} \int_{\R} k_{t-s}(x-y) G(y,s)\ dyds,
\end{equation}
where the kernel $k_{t}(x)$ is the function whose spatial Fourier transform is
\[
\hat{k}_{t}(\xi) = e^{- t |\xi|^4}.
\]
We observe that the expression in equation \eqref{solution} can be expressed in the more succinct form
\[
u(x,t) = [k_{t} * f](x) + \int_{0}^{t} [k_{t-s} * G(\cdot, s)](x)\ ds,
\]
where $*$ denotes the convolution
\[
f*g = [f*g](x) = \int_{\R} f(x-y) g(y)\ dy.
\]
For later convenience, we introduce the following convention: if $\mathbf{f} = (f_{1}, \cdots, f_{n})$ and $\mathbf{g} = (g_{1}, \cdots, g_{n})$ are vector functions, we write
\[
\mathbf{f}*\mathbf{g} = \int_{\R} \mathbf{f}(x-y) \cdot \mathbf{g}(y)\ dy,
\]
where $\mathbf{f} \cdot \mathbf{g}$ denotes the standard dot product of the vector functions $\mathbf{f}$ and $\mathbf{g}$.

Applying this to the nonlinear problem \eqref{MBE}, we recast our problem in the integral form
\[
u(x,t) = [k_{t} * u_{0}](x) + \int_{0}^{t} [k_{t-s} * \nabla \cdot \mathbf{J}(\nabla u)](x)\ ds.
\]
Note that by properties of convolutions, this may be rewritten in the slightly different form
\begin{equation}\label{integraleq}
u(x,t) = [k_{t} * u_{0}](x) + \int_{0}^{t} [\nabla k_{t-s} * \mathbf{J}(\nabla u)](x)\ ds.
\end{equation}
Following the nomenclature of Tao \cite{T2006}, we call solutions to the integral equation \eqref{integraleq} which belong to the space $C( I ; Z )$ for some function space $Z$ and time interval $I$ a \emph{strong} $Z$ \emph{solution}.

To find solutions to equation \eqref{integraleq}, we will need to make use of properties of the function $k_{t}$, which we collect in the following lemma:
\begin{lem}\label{kernel}
Assume that $\hat{k}_{t}(\xi) = e^{- t |\xi|^{4}}$,  $t > 0$, and $\xi, x \in \mathbb{R}^{d}$.  Then we have the estimates
\begin{align}
    & \quad \ \| k_{t} \|_{L^{p}_{x}} \leq C_{p} t^{-\frac{d}{4}\left( 1 - \frac{1}{p} \right)}, \label{kest1} \\
    & \| D^{n} k_{t} \|_{L^{p}_{x}} \leq C_{p,n} t^{-\frac{d}{4}\left( 1 - \frac{1}{p} \right) - \frac{n}{4}}. \label{kest2}
\end{align}
Here, $C_{p}$ and $C_{p,n}$ are positive constants with $C_{1} = 1$, and $D^{n} k_{t}$ denotes the set of all derivatives of $k_{t}$ of order $n$.
\end{lem}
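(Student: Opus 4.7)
The plan is to prove both estimates by a scaling argument that reduces everything to showing that $k_{1}$ and its derivatives lie in $L^{p}$ for all $p \in [1,\infty]$.

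First, I would argue that $k_{1}$ is a Schwartz function on $\R$. Since $|\xi|^{4} = (|\xi|^{2})^{2}$ is a polynomial in the components of $\xi$, the symbol $\hat{k}_{1}(\xi) = e^{-|\xi|^{4}}$ is smooth on $\R$ and, together with every partial derivative, decays faster than any polynomial at infinity. Hence $\hat{k}_{1}$ belongs to the Schwartz class, and invariance of the Schwartz class under the Fourier transform gives $k_{1} \in \mathcal{S}(\R)$. In particular, $k_{1} \in L^{p}(\R)$ and $D^{n} k_{1} \in L^{p}(\R)$ for every $p \in [1,\infty]$ and every $n \geq 0$, which produces the finite constants $C_{p} := \|k_{1}\|_{L^{p}}$ and $C_{p,n} := \|D^{n} k_{1}\|_{L^{p}}$.

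Second, I would extract the scaling identity
\[
k_{t}(x) = t^{-d/4}\, k_{1}(t^{-1/4} x).
\]
This follows from rewriting $\hat{k}_{t}(\xi) = e^{-t|\xi|^{4}} = \hat{k}_{1}(t^{1/4} \xi)$ and applying the inverse Fourier transform together with the change of variables $\xi = t^{-1/4} \eta$. Differentiating $n$ times in $x$ then gives
\[
D^{n} k_{t}(x) = t^{-d/4 - n/4}\, (D^{n} k_{1})(t^{-1/4} x),
\]
and the change of variables $y = t^{-1/4} x$ in the $L^{p}_{x}$ integral yields
\[
\| D^{n} k_{t} \|_{L^{p}_{x}} = t^{-\frac{d}{4}\left( 1 - \frac{1}{p}\right) - \frac{n}{4}} \| D^{n} k_{1} \|_{L^{p}_{x}},
\]
which is exactly \eqref{kest2}; the case $n = 0$ recovers \eqref{kest1}.

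There is no substantive obstacle here: the proof is a routine combination of Schwartz regularity of $\hat{k}_{1}$ with dimensional scaling. The only point that warrants a brief remark is the normalization $C_{1} = 1$. The scaling argument naturally produces $C_{1} = \|k_{1}\|_{L^{1}}$, and the identity $\hat{k}_{1}(0) = 1$ forces only $\int k_{1} \, dx = 1$, not $\|k_{1}\|_{L^{1}} = 1$. Either one should read $C_{1} = 1$ as a convention which later constants absorb, or one needs a separate nonnegativity argument for the biharmonic kernel $k_{1}$; in either case, the scaling exponents, which are the substantive content of the lemma, are unaffected.
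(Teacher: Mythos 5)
Your proof is correct. The paper does not prove this lemma at all; it simply cites Lemma 2.1 of \cite{LWZ2007}, and the argument there rests on the same two ingredients you use: the self-similar scaling $k_{t}(x) = t^{-d/4}k_{1}(t^{-1/4}x)$ coming from $\hat{k}_{t}(\xi) = \hat{k}_{1}(t^{1/4}\xi)$, and the fact that the profile $k_{1}$ and all its derivatives lie in every $L^{p}$ (which the reference obtains from pointwise decay estimates on the oscillatory integral defining $k_{1}$, whereas you get it more cleanly from invariance of the Schwartz class under the Fourier transform --- either route is fine). Your closing caveat is also well taken and worth keeping: since $e^{-|\xi|^{4}}$ is not positive definite, the kernel $k_{1}$ changes sign, so $\hat{k}_{1}(0)=1$ only gives $\int k_{1}\,dx = 1$ while $\|k_{1}\|_{L^{1}} > 1$ strictly; thus the normalization $C_{1}=1$ as literally stated is not provable, and the places where the paper uses $\|k_{t}\|_{L^{1}} \leq 1$ (e.g.\ in Proposition \ref{bound}) should really carry a fixed constant, which is harmless for the iteration but changes the bookkeeping. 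The scaling exponents in \eqref{kest1} and \eqref{kest2}, which are what the rest of the paper actually relies on, are exactly as you derive them.
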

\noindent For a proof, see \cite{LWZ2007}, Lemma 2.1.  

Next, we state some lemmas which will be useful in establishing the global theory.  The first is the following result, which will be useful in the global theory:
\begin{prop}[Integrable Bihari Inequality]\label{Bihari}
Let $g$ be continuous on $[a,b]$, let $h(t)$ be integrable on $[a,b]$, and assume that $$k \geq 0 \quad \text{and} \quad M \geq 0.$$  Suppose that the function $\omega(u)$ is a non-negative, non-decreasing continuous function for $u \geq 0$.  If $g$ satisfies the inequality
\[
g(t) \leq k + M \int_{a}^{t} h(s) \omega\left( g(s) \right)\ ds,
\]
then
\[
g(t) \leq \Omega^{-1}\left( \Omega(k) + M \int_{a}^{t} h(s)\ ds \right),
\]
where
\[
\Omega(u) = \int_{u_{0}}^{u} \frac{dy}{\omega(y)},
\]
where $u_{0} > 0$, $u \geq 0$.  Note that the expression $\Omega(k)$ must be well-defined, and $t$ must belong to some subinterval $[a, b']$ of $[a,b]$ so that the expression $\Omega(k) + M \int_{a}^{t} h(s)\ ds$ is in the domain of $\Omega^{-1}$.
\end{prop}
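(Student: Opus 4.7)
The plan is to reduce the integral inequality to a separable ODE inequality via a standard comparison trick, then integrate and invert using $\Omega$.

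First I would introduce the auxiliary function
\[
G(t) := k + M \int_{a}^{t} h(s)\, \omega(g(s))\, ds,
\]
so that by hypothesis $g(t) \le G(t)$ on $[a,b]$. Since $h$ is integrable and $\omega\circ g$ is continuous, $G$ is absolutely continuous with $G'(t) = M h(t)\, \omega(g(t))$ almost everywhere. Using the pointwise bound $g\le G$ together with the monotonicity of $\omega$, one gets $\omega(g(t)) \le \omega(G(t))$, and hence the separable differential inequality
\[
G'(t) \le M\, h(t)\, \omega(G(t)).
\]
Next I would divide through by $\omega(G(t))$, integrate from $a$ to $t$, and apply the change of variable $y = G(s)$ to recognise the left-hand side as $\Omega(G(t)) - \Omega(G(a)) = \Omega(G(t)) - \Omega(k)$, since $G(a)=k$. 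This produces
\[
\Omega(G(t)) \le \Omega(k) + M \int_{a}^{t} h(s)\, ds.
\]
Because $\omega$ is non-negative, $\Omega$ is non-decreasing where it is defined, and on the set where it is strictly increasing the inverse $\Omega^{-1}$ exists and is itself non-decreasing. Applying $\Omega^{-1}$ to both sides and then invoking $g(t) \le G(t)$ yields the desired estimate.

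The main obstacle, and the reason for the fine print in the statement, is the possible degeneracy of $\omega$. If $k=0$ and $\omega(0)=0$, then $\Omega(k) = \int_{u_0}^{0} dy/\omega(y)$ can diverge and division by $\omega(G)$ near $s=a$ is not immediate; one handles this by replacing $k$ with $k+\varepsilon$, running the argument, and letting $\varepsilon \downarrow 0$, using continuity of $\Omega^{-1}$ on its domain. Similarly, one must restrict to a subinterval $[a,b']$ on which $\Omega(k) + M\int_{a}^{t} h(s)\, ds$ stays inside the range of $\Omega$, since otherwise $\Omega^{-1}$ is not defined there. The hypotheses of the proposition explicitly cover these restrictions, so the argument above goes through on $[a,b']$.
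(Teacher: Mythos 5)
Your proof follows essentially the same route as the paper's: the same auxiliary function $V(t)=k+M\int_a^t h(s)\omega(g(s))\,ds$, almost-everywhere differentiation via integrability of $h\,\omega(g)$, the comparison $\omega(g)\le\omega(V)$, division, integration, and inversion through $\Omega$. The argument is correct, and your extra remarks on the degenerate case $k=0$, $\omega(0)=0$ (handled by the $\varepsilon$-shift) only add care that the paper delegates to the hypotheses in the statement.
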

The proof will require basic result from integration theory (see, for example, \cite{R1988}, page 107):
\begin{lem}\label{diff}
Let $f$ be integrable on $[a,b]$, and suppose that
\[
F(t) = F(a) + \int_{a}^{t} f(s)\ ds.
\]
Then $F'(t) = f(t)$ for almost every $t \in [a,b]$.
\end{lem}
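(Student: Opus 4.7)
The plan is to reduce the claim to the Lebesgue differentiation theorem on $\mathbb{R}$: for $f \in L^{1}([a,b])$, almost every $t \in [a,b]$ is a \emph{Lebesgue point} of $f$, meaning $\lim_{h \to 0^{+}} h^{-1} \int_{t}^{t+h} |f(s) - f(t)|\, ds = 0$ (and analogously on $[t-h,t]$). At such a point, for $h > 0$ the identity $F(t+h) - F(t) = \int_{t}^{t+h} f(s)\, ds$ (which is immediate from the hypothesis by subtracting the integrals from $a$) gives
\[
\left| \frac{F(t+h) - F(t)}{h} - f(t) \right| \leq \frac{1}{h} \int_{t}^{t+h} |f(s) - f(t)|\, ds \longrightarrow 0,
\]
and the analogous inequality for $h < 0$ yields $F'(t) = f(t)$. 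The whole task therefore collapses to proving Lebesgue's differentiation theorem for $f \in L^{1}$.

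The continuous case is immediate: if $f$ is continuous at $t$, then $\sup_{|s - t| \leq h} |f(s) - f(t)| \to 0$ and this dominates the averages $h^{-1} \int_{t}^{t+h} |f(s) - f(t)|\, ds$. To pass from continuous to general integrable $f$, I would deploy the Hardy-Littlewood maximal function
\[
(Mf)(t) = \sup_{h > 0} \frac{1}{2h} \int_{t-h}^{t+h} |f(s)|\, ds,
\]
together with its weak-type $(1,1)$ estimate $|\{Mf > \alpha\}| \leq C \alpha^{-1} \|f\|_{L^{1}}$. Given $\epsilon > 0$, use density of $C([a,b])$ in $L^{1}([a,b])$ to pick a continuous $g$ with $\|f - g\|_{L^{1}} < \epsilon$, set $r = f - g$, and define the deviation $E_{f}(t) = \limsup_{h \to 0^{+}} h^{-1} \int_{t}^{t+h} |f(s) - f(t)|\, ds$. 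The triangle inequality, combined with $E_{g} \equiv 0$ from the continuous case and the trivial comparison of a one-sided average with the centered maximal average, yields $E_{f}(t) \leq 2(Mr)(t) + |r(t)|$, so for each $\alpha > 0$ Chebyshev plus the weak-type bound give
\[
|\{E_{f} > 3\alpha\}| \leq |\{Mr > \alpha\}| + |\{|r| > \alpha\}| \leq \frac{(2C+1)\epsilon}{\alpha}.
\]
Sending $\epsilon \to 0$ forces $E_{f} = 0$ almost everywhere, which is exactly the Lebesgue point property needed in the reduction.

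The main obstacle is establishing the weak-type $(1,1)$ bound for $M$; this rests on a Vitali-type covering lemma asserting that from any finite collection of open intervals one can extract a disjoint subcollection whose threefold enlargement covers the original union. As this is a classical theorem of real analysis and the paper already cites \cite{R1988} for it, the cleanest route in a self-contained write-up would be simply to invoke that reference rather than reproduce the covering construction in full.
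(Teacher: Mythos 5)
Your argument is correct, but it is worth knowing that the paper does not prove this lemma at all: it simply cites Royden \cite{R1988}, page 107, where the result is the classical theorem on differentiation of the indefinite Lebesgue integral. The route taken there is quite different from yours: Royden first proves (via a Vitali covering argument) that monotone functions, hence functions of bounded variation, are differentiable almost everywhere, observes that $F$ is of bounded variation, and then identifies $F'$ with $f$ by treating bounded $f$ with the bounded convergence theorem and passing to general integrable $f$ by truncation. Your proof instead runs through the Lebesgue differentiation theorem in its modern form: reduction to Lebesgue points, the continuous case, and a density argument powered by the Hardy--Littlewood maximal function and its weak-type $(1,1)$ bound. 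The details you give are sound --- the splitting $E_{f} \leq 2(Mr) + |r|$, the Chebyshev plus weak-type estimate, and the limit $\epsilon \to 0$ all check out, and the one-sided deviation you control is handled symmetrically on the left --- with the only deferred ingredient being the covering lemma behind the maximal inequality, which is a reasonable thing to cite. The trade-off is that your approach is essentially self-contained modulo that one covering lemma and generalizes verbatim to $\mathbb{R}^{d}$, whereas the cited approach leans on the a.e.\ differentiability of monotone functions, which is a heavier standalone theorem but is the one actually on the page the paper points to; for the purposes of this paper either is more than adequate, since the lemma is only used to differentiate $V(t)$ in the proof of Proposition \ref{Bihari}.
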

\begin{proof}[Proof of Proposition \ref{Bihari}]
The proof is a modification of Bihari's original argument in \cite{B1956}.  Define
\[
V(t) = k + M \int_{a}^{t} h(s) \omega(g(s))\ ds,
\]
from which we observe that $V(a) = k$.  Since $h$ is integrable and $g$ and $\omega$ are continuous, then the product $h(s) \omega(g(s))$ is integrable.  Applying Lemma \ref{diff}, we have that
\begin{equation}\label{differential}
V'(t) = M h(t) \omega(g(t))
\end{equation}
for almost every $t \in [a,b]$.

Next, observe that $g(t) \leq V(t)$ by construction.  Since $\omega$ is non-decreasing, then we must have that $\omega(g(t)) \leq \omega(V(t))$, which implies that
\[
\frac{M h(t) \omega(g(t))}{\omega(V(t))} \leq M h(t).
\]
Applying equation \eqref{differential}, this becomes
\[
\frac{V'(t)}{\omega(V(t))} \leq M h(t)
\]
for almost every $t$.  Integrating this expression then yields
\[
\int_{a}^{t} \frac{V'(s)}{\omega(V(s))}\ ds \leq M\int_{a}^{t} h(s)\ ds.
\]
We may rewrite the integral on the left as
\begin{align*}
\int_{a}^{t} \frac{V'(s)}{\omega(V(s))}\ ds & = \int_{V(a)}^{V(t)} \frac{dV}{\omega(V)} \\ & = \Omega(V(t)) - \Omega(V(a)) \\ & = \Omega(V(t)) - \Omega(k).
\end{align*}
Hence, we obtain
\[
\Omega(V(t)) \leq \Omega(k) + M\int_{a}^{t} h(s)\ ds.
\]
To conclude the proof, we observe that if $\omega$ is non-decreasing, then so are $\Omega$ and $\Omega^{-1}$.  The desired result follows.
\end{proof}
In addition to the Bihari-type inequality above, we will also need the following result, which is Lemma 3.7 of \cite{S1968}:
\begin{lem}\label{strauss}
Let $M(t)$ be a non-negative continuous function of $t$ satisfying the inequality
\[
M(t) \leq c_{1} + c_{2} [M(t)]^{\gamma}
\]
in some interval containing 0, where $c_{1}$ and $c_{2}$ are positive constants and $\gamma > 1$.  If $M(0) \leq c_{1}$ and
\[
c_{1} c_{2}^{(\gamma - 1)^{-1}} < (1-\gamma^{-1})\gamma^{-(\gamma - 1)^{-1}},
\]
then in the same interval,
\[
M(t) < \frac{c_{1}}{1 - \gamma^{-1}}.
\]
\end{lem}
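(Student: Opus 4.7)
The plan is to run a continuity (bootstrap) argument centered on the geometry of the function $f(x) = c_1 + c_2 x^\gamma - x$. The hypothesis $M(t) \leq c_1 + c_2 M(t)^\gamma$ is exactly the statement $f(M(t)) \geq 0$, so the proof reduces to understanding the set $S = \{ x \geq 0 : f(x) \geq 0 \}$ and ruling out that the continuous path $t \mapsto M(t)$ reaches a bad part of $S$.

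First I would analyze $f$ on $[0,\infty)$. Since $f'(x) = c_2 \gamma x^{\gamma - 1} - 1$ with $\gamma > 1$, there is a unique positive critical point $x^* = (c_2 \gamma)^{-1/(\gamma - 1)}$, and a short computation using $c_2 (x^*)^{\gamma - 1} = \gamma^{-1}$ gives the minimum value $f(x^*) = c_1 - x^*(1 - \gamma^{-1})$. The smallness hypothesis rearranges to $c_1 < x^*(1 - \gamma^{-1})$, which is exactly $f(x^*) < 0$. Together with $f(0) = c_1 > 0$ and $f(x) \to \infty$, this guarantees two positive roots $0 < r_1 < x^* < r_2$ and the decomposition $S = [0, r_1] \cup [r_2, \infty)$.

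Next I would locate the initial data. Since $f(c_1) = c_2 c_1^\gamma > 0$ and $c_1 < x^*(1 - \gamma^{-1}) < x^*$, and since $f$ is strictly decreasing on $[0, x^*]$, the point $c_1$ must lie to the left of $r_1$. Hence $M(0) \leq c_1 < r_1$. Because $M$ is continuous on the interval and the open set $(r_1, r_2)$ is forbidden, the intermediate value theorem prevents $M(t)$ from ever leaving the component $[0, r_1]$, so $M(t) \leq r_1$ throughout.

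Finally I would translate $r_1 \leq x^*$ back into the stated bound. Using the defining identity $r_1 = c_1 + c_2 r_1^\gamma$, the claim $r_1 < c_1/(1 - \gamma^{-1})$ becomes, after clearing denominators, $c_2 \gamma r_1^{\gamma - 1} < 1$, which is precisely the statement $r_1 < (c_2 \gamma)^{-1/(\gamma - 1)} = x^*$. Since $r_1 < x^*$ by construction, the desired inequality follows. The one step that requires genuine care is the continuity/bootstrap argument in paragraph three, where one must be sure $M(t)$ cannot jump the gap $(r_1, r_2)$; but since $M$ is assumed continuous and starts strictly below $r_1$, this is clean.
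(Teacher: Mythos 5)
Your proof is correct: the analysis of $f(x)=c_{1}+c_{2}x^{\gamma}-x$ (two positive roots $r_{1}<x^{*}<r_{2}$ under the smallness hypothesis, continuity trapping $M(t)$ in $[0,r_{1}]$, and the algebraic identification $r_{1}<x^{*}\Leftrightarrow r_{1}<c_{1}/(1-\gamma^{-1})$) is complete and each step checks out. The paper itself gives no proof, citing Lemma 3.7 of Strauss \cite{S1968} instead, and your argument is essentially the standard one found there, so there is no substantive divergence to report.
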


To finish our preliminary discussion, we collect some basic results which will be useful below.
\begin{lem}\label{beta}
Let $a < 1$ and $b < 1$.  Then there exists a constant $C_{a,b}$ such that
\[
\int_{0}^{t} (t-s)^{-a}s^{-b}\ ds = C_{a,b} t^{1 - a - b}.
\]
\end{lem}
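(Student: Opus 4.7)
The plan is to reduce this to the classical Beta function integral via a simple rescaling, since the integrand is scale-homogeneous in $s$ and $t-s$. First I would introduce the change of variables $s = tu$, so that $ds = t\, du$ and $t - s = t(1-u)$. Under this substitution the integration range $s \in [0,t]$ is transformed to $u \in [0,1]$, and all $t$-dependence factors cleanly out of the integrand.

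More precisely, the computation becomes
\[
\int_{0}^{t} (t-s)^{-a} s^{-b}\ ds = \int_{0}^{1} \bigl(t(1-u)\bigr)^{-a} (tu)^{-b}\, t\, du = t^{1-a-b} \int_{0}^{1} (1-u)^{-a} u^{-b}\, du.
\]
The remaining integral is independent of $t$ and is exactly the Beta function $B(1-b, 1-a)$, which is finite precisely when both $1-b > 0$ and $1-a > 0$, i.e., under the hypotheses $a < 1$ and $b < 1$. Setting $C_{a,b} = B(1-b, 1-a) = \Gamma(1-a)\Gamma(1-b)/\Gamma(2-a-b)$ yields the claimed identity.

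There is no real obstacle here; the only point worth checking is that the two endpoint singularities of the transformed integral, at $u=0$ (of order $u^{-b}$) and at $u=1$ (of order $(1-u)^{-a}$), are both integrable, which is guaranteed exactly by the assumed bounds $a < 1$ and $b < 1$. The conclusion then follows immediately.
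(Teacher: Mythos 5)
Your proof is correct and follows essentially the same route as the paper: the substitution $s = tu$ pulls out the factor $t^{1-a-b}$ and reduces the claim to the convergence of $\int_0^1 (1-u)^{-a}u^{-b}\,du$, which holds precisely for $a<1$, $b<1$. Identifying the constant explicitly as $B(1-b,1-a)$ is a small bonus beyond what the paper records.
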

\begin{proof}
By the simple substitution $u = \frac{s}{t}$, we have
\[
\int_{0}^{t} (t-s)^{-a}s^{-b}\ ds = t^{1-a-b}\int_{0}^{1} (1-u)^{-a} u^{-b}\ du.
\]
The integral on the right converges when $a < 1$ and $b < 1$.
\end{proof}
\begin{lem}[Young's Inequality]\label{young}
Let $f \in L^{p}(\mathbb{R}^{d})$ and $g \in L^{q}(\mathbb{R}^{d})$, for $1 \leq p, q \leq \infty$ with
\[
\frac{1}{p} + \frac{1}{q} \geq 1.
\]
Then $f * g \in L^{r}(\mathbb{R}^{d})$, where
\[
\frac{1}{r} = \frac{1}{p} + \frac{1}{q} - 1.
\]
Moreover, we have the estimate
\begin{equation}\label{youngineq}
\| f * g \|_{L^{r}} \leq \| f \|_{L^{p}} \| g \|_{L^{q}}.
\end{equation}
\end{lem}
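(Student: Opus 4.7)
The plan is to reduce the inequality to a three-fold application of H\"older in the convolution integral, followed by Fubini. First I would dispose of the boundary cases separately: if $p = 1$ (so $r = q$) or $q = 1$ (so $r = p$), the estimate follows from Minkowski's integral inequality, and if $1/p + 1/q = 1$ (so $r = \infty$), it is a direct application of the usual H\"older inequality in the $y$ variable.

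For the generic interior case, the key is the algebraic identity
\[
|f(x-y) g(y)| = \bigl( |f(x-y)|^{p} |g(y)|^{q} \bigr)^{1/r} \cdot |f(x-y)|^{1 - p/r} \cdot |g(y)|^{1 - q/r},
\]
which is valid because the exponents attached to $f$ and to $g$ each sum to $1$. Setting $s := pr/(r-p)$ and $t := qr/(r-q)$, the hypothesis $1/p + 1/q = 1 + 1/r$ yields $1/r + 1/s + 1/t = 1$, so H\"older's inequality in three factors with these exponents applies to the product above.

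Carrying out the three $y$-integrals gives, respectively, the factors $\bigl(\int |f(x-y)|^{p} |g(y)|^{q}\, dy\bigr)^{1/r}$, $\|f\|_{L^{p}}^{p/s}$ (using $(1 - p/r)s = p$), and $\|g\|_{L^{q}}^{q/t}$, the last two being independent of $x$. Raising both sides to the $r$-th power, integrating in $x$, and applying Fubini to the only $x$-dependent factor extracts $\|f\|_{L^{p}}^{p} \|g\|_{L^{q}}^{q}$ from the mixed integral. A quick check of the total exponents (each of $p + rp/s$ and $q + rq/t$ collapses to $r$) produces $\|f * g\|_{L^{r}}^{r} \le \|f\|_{L^{p}}^{r} \|g\|_{L^{q}}^{r}$, and an $r$-th root finishes the bound.

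The only real obstacle is exponent bookkeeping: one has to verify that $s$ and $t$ are finite and at least $1$ in the interior regime, which is exactly why the degenerate cases (where $s$ or $t$ would blow up) are peeled off at the start. Once the three exponents are correctly arranged, the argument is a mechanical H\"older--Fubini computation with no further analytic subtlety.
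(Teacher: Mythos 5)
Your argument is correct: the algebraic splitting $|f(x-y)g(y)| = \bigl(|f(x-y)|^{p}|g(y)|^{q}\bigr)^{1/r}|f(x-y)|^{1-p/r}|g(y)|^{1-q/r}$, the three-exponent H\"older with $1/r+1/s+1/t=1$, and the final Fubini step all check out, as does the exponent accounting ($p+rp/s=q+rq/t=r$) and the separate treatment of the endpoint cases $p=1$, $q=1$, and $r=\infty$ where $s$ or $t$ degenerates. The paper itself states this lemma as a standard preliminary without proof, so there is no in-paper argument to compare against; what you have written is the classical H\"older--Fubini proof of Young's convolution inequality with constant $1$, and it is complete.
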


\begin{lem}[Gagliardo-Nirenberg Inequality]\label{gagl}
Let $1 < p < q \leq \infty$, and let $s \geq 0$ be such that
\[
\frac{1}{q} = \frac{1}{p} - \frac{\theta s}{d}
\]
for some $0 < \theta < 1$.  Then for any $u \in W^{1,p}(\R)$, we have
\[
\| u \|_{L^{q}} \leq C \| u \|_{L^{p}}^{1-\theta} \| u \|_{\dot{W}^{s,p}}^{\theta}
\]
for some $C > 0$.
\end{lem}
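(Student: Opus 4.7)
The plan is to prove this standard Gagliardo-Nirenberg-type inequality by combining a Littlewood-Paley decomposition with Bernstein's inequality, then optimizing a cutoff frequency. The first step is to write $u = P_{\leq N} u + P_{>N} u$, where $P_{\leq N}$ projects onto frequencies $|\xi| \lesssim 2^{N}$ and $P_{>N} = \sum_{j > N} P_{j}$. For the low-frequency part, Bernstein's inequality gives $\| P_{\leq N} u \|_{L^{q}} \lesssim 2^{Nd(1/p - 1/q)} \| u \|_{L^{p}}$. For each high-frequency dyadic block, Bernstein combined with the frequency-support condition yields $\| P_{j} u \|_{L^{q}} \lesssim 2^{j[d(1/p-1/q) - s]} \| P_{j} u \|_{\dot{W}^{s,p}}$; the scaling hypothesis $1/q = 1/p - \theta s/d$ makes the bracketed exponent equal $(\theta-1)s < 0$, so summing the geometric series in $j > N$ gives $\| P_{>N} u \|_{L^{q}} \lesssim 2^{N(\theta-1)s} \| u \|_{\dot{W}^{s,p}}$.

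The next step is to optimize by choosing $N$ so that the two bounds balance, namely $2^{Ns} \sim \| u \|_{\dot{W}^{s,p}} / \| u \|_{L^{p}}$. Substituting back produces the desired estimate $\| u \|_{L^{q}} \lesssim \| u \|_{L^{p}}^{1-\theta} \| u \|_{\dot{W}^{s,p}}^{\theta}$. An alternative and slicker route, available whenever $s < d/p$, is to interpolate by H\"older between the trivial embedding $L^{p} \hookrightarrow L^{p}$ and the Sobolev endpoint embedding $\dot{W}^{s,p} \hookrightarrow L^{q_{0}}$ with $1/q_{0} = 1/p - s/d$; the endpoint embedding itself follows by writing $u = I_{s}((-\Delta)^{s/2} u)$ and applying the Hardy-Littlewood-Sobolev inequality to the Riesz potential $I_{s}$. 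Hölder then gives $\| u \|_{L^{q}} \leq \| u \|_{L^{p}}^{1-\theta} \| u \|_{L^{q_{0}}}^{\theta}$ provided $1/q = (1-\theta)/p + \theta/q_{0}$, which is exactly the stated scaling.

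The main obstacle is the endpoint $q = \infty$, which is permitted by the lemma but sits outside the range of Hardy-Littlewood-Sobolev, so the interpolation route fails there. The Littlewood-Paley approach still goes through, provided one is careful to exploit the strict inequality $\theta < 1$ so that the dyadic tail converges in $L^{\infty}$; once this convergence is justified (using, if needed, an auxiliary Besov or BMO formulation of the endpoint), the same balancing argument delivers the stated bound uniformly in $q$. Since the applications of this lemma in the later sections involve only finite $q$, either route is adequate for the purposes of this paper.
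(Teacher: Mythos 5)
The paper never proves this lemma: it is quoted in the preliminaries as a standard Gagliardo--Nirenberg interpolation inequality, with no argument and no citation, so there is no in-paper proof to compare against. Your proposal supplies a correct proof. The Littlewood--Paley route is sound: Bernstein gives $\| P_{\leq N} u \|_{L^{q}} \lesssim 2^{Nd(1/p-1/q)} \| u \|_{L^{p}} = 2^{N\theta s}\| u \|_{L^{p}}$, while for $j > N$ the bound $\| P_{j} u \|_{L^{q}} \lesssim 2^{j(\theta-1)s} \| P_{j} u \|_{\dot{W}^{s,p}}$ sums geometrically precisely because $\theta < 1$ (and $s>0$ is automatic from $p<q$), and choosing $2^{Ns} \sim \| u \|_{\dot{W}^{s,p}} / \| u \|_{L^{p}}$ balances the two pieces to give $\| u \|_{L^{p}}^{1-\theta}\| u \|_{\dot{W}^{s,p}}^{\theta}$ on both sides; the degenerate case $\| u \|_{\dot{W}^{s,p}} = 0$ or $\| u \|_{L^{p}} = 0$ is trivial. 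Your alternative route (H\"older between $L^{p}$ and the Sobolev embedding $\dot{W}^{s,p} \hookrightarrow L^{q_{0}}$, $1/q_{0} = 1/p - s/d$, via the Riesz potential and Hardy--Littlewood--Sobolev) is also correct, but as you note it requires $s < d/p$, so it misses $q = \infty$ and more generally any case with $s \geq d/p$; for the paper's actual application (Section 6, with $s = 1$, $q = 2$, $1 < p < 2$, $d = 2$) it suffices. Two small remarks: the closing appeal to ``an auxiliary Besov or BMO formulation'' is unnecessary --- your dyadic argument already handles $q = \infty$ directly, since the tail converges absolutely in $L^{\infty}$ and the partial sums converge to $u$ in $L^{p}$, hence almost everywhere along a subsequence; and you should say a word about why $\| P_{j} u \|_{\dot{W}^{s,p}} \lesssim \| u \|_{\dot{W}^{s,p}}$ uniformly in $j$ (boundedness of the projections on $L^{p}$, valid here since the multiplier kernel is $L^{1}$-normalized). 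Neither point affects the validity of the argument.
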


\section{Local Well-Posedness}\label{localwell}

We now turn to the proof of Theorem \ref{mainthm1}.  As is standard practice, we begin by first obtaining a local result.  We reformulate our local result in the following more explicit (and more general) form:
\begin{prop}\label{local}
Let $1 \leq p < \infty$, and assume that the surface current satisfies $\mathbf{J}(\nabla u) \leq C |\nabla u|^{q}$ for some $q \geq 1$.  Then for any $$u_{0}^{*} \in W^{1,p}(\R) \cap W^{1,\infty}(\R),$$ there exists a time $\delta > 0$ and an open ball $B \subset W^{1,p}(\R) \cap W^{1,\infty}(\R) $ containing $u_{0}^{*}$, such that for each $u_{0} \in B$ there exists a unique strong $W^{1,p}(\R) \cap W^{1,\infty}(\R)$ solution $u$ to the integral equation \eqref{integraleq} in the interval $I = [0, \delta)$.  Furthermore, the solution map $u_{0} \mapsto u$ is continuous from $B$ to $C(I;W^{1,p} \cap W^{1,\infty}(\R))$.
\end{prop}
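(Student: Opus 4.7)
The plan is to apply the Banach contraction mapping principle to the Duhamel map
\[
\Phi(u)(x,t) = [k_{t} * u_{0}](x) + \int_{0}^{t} [\nabla k_{t-s} * \mathbf{J}(\nabla u)](x)\, ds
\]
on a closed ball in the Banach space $X_{\delta} := C([0,\delta]; W^{1,p}(\R) \cap W^{1,\infty}(\R))$, equipped with the natural supremum-in-time norm. I would fix $u_{0}^{*}$, choose $R > \|u_{0}^{*}\|_{W^{1,p} \cap W^{1,\infty}}$, take $B = \{u_0 : \|u_0 - u_0^*\|_{W^{1,p} \cap W^{1,\infty}} < R\}$, and look for the fixed point inside the ball $\mathcal{B}_{2R} \subset X_{\delta}$ of radius $2R$ centered at the constant-in-time function equal to $u_{0}^{*}$; both $\delta$ and the radius will be tuned below.

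The core of the argument is a pair of estimates. For the self-map property, I would use Lemma \ref{kernel} with $n = 1, 2$ and $p = 1$, together with Young's inequality (Lemma \ref{young}), to bound
\[
\|\nabla k_{t-s}\|_{L^{1}} \leq C(t-s)^{-1/4}, \qquad \|D^{2} k_{t-s}\|_{L^{1}} \leq C(t-s)^{-1/2},
\]
and then estimate the Duhamel integral in $L^{p}$ and $L^{\infty}$ and likewise for one spatial derivative (placing the extra derivative on the kernel). The growth condition $|\mathbf{J}(z)| \leq C|z|^{q}$, combined with the interpolation $\|\nabla u\|_{L^{pq}}^{q} \leq \|\nabla u\|_{L^{p}} \|\nabla u\|_{L^{\infty}}^{q-1}$, yields
\[
\|\mathbf{J}(\nabla u)\|_{L^{p}} \leq C\|\nabla u\|_{L^{p}} \|\nabla u\|_{L^{\infty}}^{q-1}, \qquad \|\mathbf{J}(\nabla u)\|_{L^{\infty}} \leq C\|\nabla u\|_{L^{\infty}}^{q},
\]
so that the Duhamel term picks up a factor $\delta^{\alpha}$ (with $\alpha = 3/4$ or $1/2$) after integrating the kernel singularity via Lemma \ref{beta}. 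Thus $\|\Phi(u)\|_{X_{\delta}} \leq \|u_{0}\|_{W^{1,p} \cap W^{1,\infty}} + C \delta^{\alpha} (2R)^{q}$, which is bounded by $2R$ once $\delta$ is small enough.

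For the contraction estimate, I would write
\[
\mathbf{J}(\nabla u) - \mathbf{J}(\nabla v) = \left(\int_{0}^{1} \mathbf{J}'(\tau \nabla u + (1-\tau) \nabla v)\, d\tau\right) (\nabla u - \nabla v),
\]
and use differentiability of $\mathbf{J}$ together with the growth $|\mathbf{J}'(z)| = O(|z|^{q-1})$ on the bounded set $\{|z| \leq 2R\}$ to produce
\[
\|\mathbf{J}(\nabla u) - \mathbf{J}(\nabla v)\|_{L^{p} \cap L^{\infty}} \leq C(R)\, \|\nabla u - \nabla v\|_{L^{p} \cap L^{\infty}}.
\]
Repeating the kernel/Young estimates as above produces $\|\Phi(u) - \Phi(v)\|_{X_{\delta}} \leq C(R)\, \delta^{\alpha}\, \|u - v\|_{X_{\delta}}$, which is a strict contraction for $\delta$ sufficiently small. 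Banach's fixed point theorem then supplies the unique strong solution on $I = [0,\delta)$.

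Continuous dependence on the initial data follows from the same machinery applied to two solutions $u, v$ with initial data $u_{0}, v_{0} \in B$: subtracting their Duhamel representations and estimating in $X_{\delta}$ yields $\|u - v\|_{X_{\delta}} \leq \|u_{0} - v_{0}\|_{W^{1,p} \cap W^{1,\infty}} + C(R)\delta^{\alpha}\|u - v\|_{X_{\delta}}$, which absorbs the last term for small $\delta$. I expect the main technical obstacle to be bookkeeping the two norms $W^{1,p}$ and $W^{1,\infty}$ simultaneously: the superlinear nonlinearity $\mathbf{J}(\nabla u) = O(|\nabla u|^{q})$ forces integrability of $\nabla u$ in $L^{pq}$, and without the $L^{\infty}$ control the interpolation step would fail, so carrying both norms through every estimate is essential.
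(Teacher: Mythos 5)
Your proposal is correct, and it rests on exactly the same estimates as the paper: the Duhamel formulation \eqref{integraleq}, the kernel bounds of Lemma \ref{kernel} with Young's inequality giving $\| \nabla k_{t-s} \|_{L^{1}} \lesssim (t-s)^{-1/4}$ and $\| D^{2} k_{t-s} \|_{L^{1}} \lesssim (t-s)^{-1/2}$, the growth bound $\| \mathbf{J}(\nabla u) \|_{L^{p}} \lesssim \| \nabla u \|_{L^{\infty}}^{q-1} \| \nabla u \|_{L^{p}}$, and a mean-value bound for $\mathbf{J}(\nabla u)-\mathbf{J}(\nabla v)$. The difference is packaging: you invoke the Banach fixed-point theorem on a ball in $C([0,\delta]; W^{1,p}\cap W^{1,\infty})$, whereas the paper runs an explicit Picard iteration -- uniform boundedness of the iterates (Proposition \ref{bound}), geometric convergence $\| u^{n}-u^{n-1} \| \leq C\delta^{n/2}$ (Proposition \ref{convergence}), a separate uniqueness argument via the integrable Bihari inequality (Proposition \ref{uniqua}), and then continuity of the solution map (Proposition \ref{continuous}). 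The contraction route buys existence, in-ball uniqueness, and Lipschitz dependence on the data in one stroke; the paper's separate uniqueness step yields uniqueness among \emph{all} strong $W^{1,p}\cap W^{1,\infty}$ solutions rather than only those in the chosen ball, but your final continuous-dependence estimate (with constant depending on the sup-norms of the two solutions) supplies the same unconditional uniqueness, so nothing essential is missing. Two minor bookkeeping remarks: centering the ball of radius $2R$ at the constant-in-time extension of $u_{0}^{*}$ does not quite close as written, since already the linear term satisfies only $\| k_{t}*u_{0}-u_{0}^{*} \| \leq \| u_{0}-u_{0}^{*} \| + 2\| u_{0}^{*} \|$, which can approach $3R$; enlarging the radius or centering at the free evolution fixes this, and you explicitly left the radius to be tuned. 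Also, bounding $\mathbf{J}'$ uniformly on the bounded set $\{|z|\leq 2R\}$ requires continuity of $\mathbf{J}'$ (or local Lipschitz continuity of $\mathbf{J}$) rather than bare differentiability, but the paper's own mean-value step makes the identical implicit assumption, so this is not a gap relative to the paper.
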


We prove this by a standard iteration argument.  Fix $u_{0}^{*} \in W^{1,p}(\R)$, and let $B = B(u_{0}^{*}, R)$ be the open ball of radius $R$ centered at $u_{0}^{*}$.  Define a sequence $\{ u^{n} \}_{n = 0}^{\infty}$ by 
\begin{align*}
& u^{0}_{t} + \Delta^{2} u^{0} = 0 & \quad & u^{n}_{t} + \Delta^{2} u^{n} + \nabla \cdot \mathbf{J}(\nabla u^{n-1}) = 0 \\
& u^{0}(0) = u_{0} & & u^{n}(0) = u_{0}.
\end{align*}
We will show that this sequence converges in $C\left( [0,\delta); W^{1,p} \cap W^{1,\infty}\right)$ for sufficiently small $\delta$.  As a first step, we show it is bounded.
\begin{prop}\label{bound}
If $\delta > 0$ is sufficiently small, then
\[
\| u^{n} \|_{L_{t}^{\infty}(W^{1,p} \cap W^{1,\infty})} \leq 2 \| u_{0} \|_{W^{1,p} \cap W^{1,\infty}}
\]
for each $n \geq 0$.
\end{prop}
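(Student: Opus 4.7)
The plan is to prove the bound by induction on $n$, controlling the four quantities $\|u^n\|_{L^p}$, $\|u^n\|_{L^\infty}$, $\|\nabla u^n\|_{L^p}$, and $\|\nabla u^n\|_{L^\infty}$ simultaneously through the integral formulation. For the base case $n = 0$, the function $u^0 = k_t * u_0$ is the linear biharmonic evolution of $u_0$, and since $C_1 = 1$ in Lemma \ref{kernel}, Young's inequality (Lemma \ref{young}) gives $\|u^0(t)\|_X \leq \|u_0\|_X$ where $X = W^{1,p} \cap W^{1,\infty}(\R)$, which is comfortably less than $2\|u_0\|_X$.

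For the inductive step, assume $\|u^{n-1}(t)\|_X \leq 2\|u_0\|_X$ on $[0,\delta)$. Writing out $u^n$ from the integral formulation \eqref{integraleq},
\[
u^n(t) = k_t * u_0 + \int_0^t \nabla k_{t-s} * \mathbf{J}(\nabla u^{n-1}(s))\, ds,
\]
I would bound the $L^p$ norm using Young's inequality together with $\|\nabla k_{t-s}\|_{L^1} \leq C(t-s)^{-1/4}$ from \eqref{kest2}, and for $\nabla u^n(t)$ differentiate under the integral to pick up $D^2 k_{t-s}$ and the estimate $\|D^2 k_{t-s}\|_{L^1} \leq C(t-s)^{-1/2}$. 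The analogous $L^\infty$ estimates are obtained the same way, since the $L^1$ bound on the kernel derivatives pairs with any target space via Young's. The key auxiliary step is to control $\|\mathbf{J}(\nabla u^{n-1})\|_{L^p}$ and $\|\mathbf{J}(\nabla u^{n-1})\|_{L^\infty}$ in terms of $\|u^{n-1}\|_X$: for the latter I use $|\mathbf{J}(v)| \leq C|v|^q$ directly, giving $\|\mathbf{J}(\nabla u^{n-1})\|_{L^\infty} \leq C\|u^{n-1}\|_X^q$, while for the former I combine $\||\nabla u|^q\|_{L^p} = \|\nabla u\|_{L^{pq}}^q$ with the interpolation $\|\nabla u\|_{L^{pq}} \leq \|\nabla u\|_{L^p}^{1/q}\|\nabla u\|_{L^\infty}^{1-1/q}$ to conclude $\|\mathbf{J}(\nabla u^{n-1})\|_{L^p} \leq C\|u^{n-1}\|_X^q$ as well.

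Putting these ingredients together, the time integrals reduce (via Lemma \ref{beta}) to positive powers of $t$: specifically $t^{3/4}$ for the estimates on $u^n$ itself and $t^{1/2}$ for the estimates on $\nabla u^n$. Thus for each of the four components of the $X$-norm I obtain an inequality of the shape
\[
\|u^n(t)\|_X \leq \|u_0\|_X + C\,(2\|u_0\|_X)^q\,\delta^{1/2},
\]
so choosing $\delta > 0$ small enough — depending only on $\|u_0\|_X$, $q$, and the universal constants in Lemma \ref{kernel} — makes the nonlinear remainder smaller than $\|u_0\|_X$, which closes the induction and yields the factor $2$.

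The main obstacle is the estimate of $\|\mathbf{J}(\nabla u^{n-1})\|_{L^p}$, because the pointwise growth $|\mathbf{J}(v)| \leq C|v|^q$ naturally controls this quantity only in $L^{p/q}$ or in $L^{pq}$, not in $L^p$ directly; the interpolation step that converts the $L^{pq}$ bound into a product of $L^p$ and $L^\infty$ norms is what makes the $W^{1,p}\cap W^{1,\infty}$ framework essential and is the step that would have to be revisited if one wanted to drop the $L^\infty$ component of the norm. Everything else is a careful bookkeeping of the kernel estimates and a single application of Lemma \ref{beta} to the time integrals.
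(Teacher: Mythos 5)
Your proposal is correct and follows essentially the same route as the paper: induction with the base case from $\|k_t\|_{L^1}=1$, kernel bounds $\|\nabla k_{t-s}\|_{L^1}\lesssim (t-s)^{-1/4}$, $\|D^2 k_{t-s}\|_{L^1}\lesssim (t-s)^{-1/2}$, and closing the loop for small $\delta$. Your interpolation step $\||\nabla u|^q\|_{L^p}=\|\nabla u\|_{L^{pq}}^q\leq \|\nabla u\|_{L^p}\|\nabla u\|_{L^\infty}^{q-1}$ is just the paper's estimate \eqref{bee} obtained via $L^{pq}$ interpolation instead of the direct pointwise H\"older split, so the two arguments coincide in substance.
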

\begin{proof}
By induction.  For $n = 0$, we apply equation \eqref{solution} with $G = 0$ to obtain
\[
\| u^{0}(t) \|_{W^{1,p}} = \| k_{t} * u_{0} \|_{W^{1,p}} \leq \| k_{t} \|_{L^{1}} \| u_{0} \|_{W^{1,p}}.
\]
If we now apply Lemma \ref{kernel}, this simplifies to
\[
\| u^{0}(t) \|_{W^{1,p}} \leq \| u_{0} \|_{W^{1,p}},
\]
which implies the desired result for $n = 0$.

Now assume the result holds for $n = m$.  Then applying equation \eqref{solution} once again, we see that
\begin{equation}\label{ay}\begin{aligned}
\| u^{m+1} \|_{W^{1,p}} & \leq \| k_{t} * u_{0} \|_{W^{1,p}} + \int_{0}^{t} \| k_{t-s} * \nabla \cdot \mathbf{J}(\nabla u^{m}) \|_{W^{1,p}}\ ds \\
& \leq \| u_{0} \|_{W^{1,p}} + \int_{0}^{t} \| \nabla k_{t-s} \|_{W^{1,1}} \| \mathbf{J}(\nabla u^{m}) \|_{L^{p}} \ ds 
\end{aligned}\end{equation}
Using the assumption on $J$, we have that
\begin{equation}\label{bee}\begin{aligned}
\| \mathbf{J}(\nabla u^{m}(s)) \|_{L^{p}} & \leq \| \nabla u^{m}(s) \|_{L^{\infty}}^{q-1} \| \nabla u^{m}(s) \|_{L^{p}} \\
& \leq \| u(s) \|_{W^{1,\infty}}^{q-1} \| u(s) \|_{W^{1,p}} \\
& \leq \| u(s) \|_{W^{1,p} \cap W^{1,\infty}}^{q} \\
& \leq (2 \| u_{0} \|_{W^{1,p} \cap W^{1,\infty}})^{q}
\end{aligned}\end{equation}
Using the properties of $k_{t-s}$ from Lemma \ref{kernel}, we see that
\begin{equation}\label{cee}
\| \nabla k_{t-s} \|_{W^{1,1}} \lesssim (t-s)^{-\frac{1}{2}} + (t-s)^{-\frac{1}{4}}.
\end{equation}
Combining equations \eqref{ay}, \eqref{bee}, and \eqref{cee}, we see that
\[
\| u^{m+1}(t) \|_{W^{1,p}} \leq \| u_{0} \|_{W^{1,p}} + C(t^{\frac{1}{2}} + t^{\frac{3}{4}})(2 \| u_{0} \|_{W^{1,p} \cap W^{1,\infty}})^{q}.
\]
Taking the supremum in $I = [0,\delta)$ gives us
\[
\| u^{m+1} \|_{L^{\infty}_{t}W^{1,p}_{x}} \leq \| u_{0} \|_{W^{1,p}} + C(\delta^{\frac{1}{2}} + \delta^{\frac{3}{4}})(2 \| u_{0} \|_{W^{1,p} \cap W^{1,\infty}})^{q}.
\]
The desired result follows for $\delta$ sufficiently small.  To complete the proof, we simply observe that the computations above also hold in the special case $p = \infty$ with only a minor modification.
\end{proof}
\begin{prop}\label{convergence}
The sequence $\{ u^{n} \}_{n = 0}^{\infty}$ satisfies
\[
\| u^{n} - u^{n-1} \|_{L^{\infty}_{t}( W^{1,p} \cap W^{1,\infty})} \leq C \delta^{\frac{n}{2}}
\]
for some $C > 0$.
\end{prop}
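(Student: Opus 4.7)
The plan is to adapt the argument of Proposition \ref{bound} to the difference $u^n - u^{n-1}$. Subtracting the integral representation \eqref{integraleq} for $u^n$ and $u^{n-1}$, the contribution $k_t \ast u_0$ cancels, leaving
\[
u^n(t) - u^{n-1}(t) = \int_0^t \nabla k_{t-s} \ast \bigl[\mathbf{J}(\nabla u^{n-1}(s)) - \mathbf{J}(\nabla u^{n-2}(s))\bigr]\, ds.
\]
The new ingredient relative to Proposition \ref{bound} is a Lipschitz-type estimate on $\mathbf{J}$. Since $\mathbf{J}$ is differentiable with $\mathbf{J}(\xi) = O(|\xi|^q)$, its Jacobian satisfies $|D\mathbf{J}(\xi)| \lesssim |\xi|^{q-1}$, so the mean value theorem supplies the pointwise bound
\[
|\mathbf{J}(\nabla u^{n-1}) - \mathbf{J}(\nabla u^{n-2})| \lesssim \bigl(|\nabla u^{n-1}|^{q-1} + |\nabla u^{n-2}|^{q-1}\bigr)\,|\nabla u^{n-1} - \nabla u^{n-2}|.
\]
Taking $L^p_x$ norms and pulling out the $L^\infty_x$ factors, I would then invoke Proposition \ref{bound} to absorb $\|u^{n-1}\|_{W^{1,\infty}}^{q-1}$ and $\|u^{n-2}\|_{W^{1,\infty}}^{q-1}$ into a constant $K$ depending only on $\|u_0\|_{W^{1,p}\cap W^{1,\infty}}$, obtaining
\[
\|\mathbf{J}(\nabla u^{n-1}(s)) - \mathbf{J}(\nabla u^{n-2}(s))\|_{L^p_x} \leq K\, \|u^{n-1}(s) - u^{n-2}(s)\|_{W^{1,p}\cap W^{1,\infty}}.
\]

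Next I would apply Young's inequality together with the kernel estimate \eqref{cee}, exactly as in \eqref{ay}, and compute the resulting time integral using Lemma \ref{beta}. For $t \in [0,\delta)$ this produces
\[
\|u^n(t) - u^{n-1}(t)\|_{W^{1,p}} \leq C\bigl(\delta^{1/2} + \delta^{3/4}\bigr)\, \|u^{n-1} - u^{n-2}\|_{L^\infty_t(W^{1,p}\cap W^{1,\infty})},
\]
and the same computation, with the minor modification indicated at the end of the proof of Proposition \ref{bound}, works in $W^{1,\infty}$. For $\delta \leq 1$ the parenthesis is bounded by $2\delta^{1/2}$, which provides the desired contraction factor.

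The base case $\|u^1 - u^0\|$ is handled exactly as in \eqref{ay}--\eqref{cee}, using that $\mathbf{J}(\nabla u^0)$ is $L^p$-bounded by a power of $\|u_0\|_{W^{1,p}\cap W^{1,\infty}}$, to give $\|u^1 - u^0\|_{L^\infty_t(W^{1,p}\cap W^{1,\infty})} \leq C\delta^{1/2}$. Iterating the contraction then yields $\|u^n - u^{n-1}\|_{L^\infty_t(W^{1,p}\cap W^{1,\infty})} \leq C\delta^{n/2}$, possibly after shrinking $\delta$ so that the contraction constant times $\delta^{1/2}$ is at most one, which absorbs the accumulated powers of $K$ into the leading constant. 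The main obstacle is the pointwise Lipschitz estimate on $\mathbf{J}$; once that is in hand, the rest is a direct adaptation of the a priori bound in Proposition \ref{bound}, and the exponent $n/2$ emerges naturally from the half-power of $\delta$ generated at each step by the time integration against $\nabla k_{t-s}$.
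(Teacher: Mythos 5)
Your argument is correct and is essentially the paper's own proof: both subtract the Duhamel representations so the $k_t * u_0$ terms cancel, use the uniform bound of Proposition \ref{bound} together with the mean value theorem to get a Lipschitz estimate $\| \mathbf{J}(\nabla u^{m}) - \mathbf{J}(\nabla u^{m-1}) \|_{L^p} \leq C \| \nabla u^{m} - \nabla u^{m-1} \|_{L^p}$, apply the kernel bounds to produce the factor $\delta^{1/2} + \delta^{3/4}$, and iterate (the paper phrases the iteration as induction on $n$, with the same base case for $u^1 - u^0$). The only cosmetic difference is that you make the Jacobian bound $|D\mathbf{J}(\xi)| \lesssim |\xi|^{q-1}$ explicit, where the paper simply invokes differentiability of $\mathbf{J}$ plus the uniform boundedness of the iterates.
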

\begin{proof}
By induction on $n$.  For $n = 1$, we apply a similar computation as in the proof of Proposition \ref{bound} to say that
\[
\| u^{1} - u^{0} \|_{L^{\infty}_{t} W^{1,p}_{x}} \leq C(\delta^{\frac{1}{2}} + \delta^{\frac{3}{4}})(2 \| u_{0} \|_{W^{1,p} \cap W^{1,\infty}})^{q}.
\]
If $\delta$ is sufficiently small, this reduces to
\[
\| u^{1} - u^{0} \|_{L^{\infty}_{t}W^{1,p}_{x}} \leq 2^{q+1}C \delta^{\frac{1}{2}} \| u_{0} \|_{W^{1,p} \cap W^{1,\infty}}^{q}.
\]
This proves the result for $n = 1$.

Now assume the result holds for $n = m$.  Applying equations \eqref{solution} and \eqref{kest1}, we obtain that
\[
\| u^{m+1} - u^{m} \|_{W^{1,p}} \leq \int_{0}^{t} \| \nabla k_{t-s} \|_{W^{1,1}} \| \mathbf{J}(\nabla u^{m}) - \mathbf{J}(\nabla u^{m-1}) \|_{L^{p}}\ ds.
\]
Using the fact that the sequence is uniformly bounded, and the fact that $\mathbf{J}$ is differentiable, we obtain
\[
\| \mathbf{J}(\nabla u^{m}) - \mathbf{J}(\nabla u^{m-1}) \|_{L^{p}} \leq C \| \nabla u^{m} - \nabla u^{m-1} \|_{L^{p}}
\]
for some constant $C$ by the Mean Value Theorem.  Using the inductive hypothesis and equation \eqref{kest2}, we obtain
\begin{align*}
\| u^{m+1} - u^{m} \|_{W^{1,p}} & \leq C \delta^{\frac{m}{2}}\int_{0}^{t} (t-s)^{-\frac{1}{2}} + (t-s)^{-\frac{1}{4}}\ ds \\
& \leq C \delta^{\frac{m}{2}} \left( \delta^{\frac{1}{2}} + \delta^{\frac{3}{4}} \right) \\
& \leq C \delta^{\frac{m+1}{2}},
\end{align*}
as desired.  As before, we observe that the above computations also hold for $p = \infty$.  This completes the proof of the proposition.
\end{proof}
\begin{cor}[Existence]\label{existence}
The sequence $\{ u_{n} \}_{n = 0}^{\infty}$ converges to a solution $u$ of equation \eqref{integraleq}.
\end{cor}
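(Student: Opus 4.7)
The plan is to read convergence of the iteration $\{u^n\}$ in $C([0,\delta); W^{1,p}(\R) \cap W^{1,\infty}(\R))$ directly off Proposition \ref{convergence}. For $\delta$ small enough that $C\delta^{1/2} < 1$, the geometric bound $\|u^n - u^{n-1}\|_{L^\infty_t(W^{1,p} \cap W^{1,\infty})} \leq C\delta^{n/2}$ makes the telescoping series $u^n = u^0 + \sum_{k=1}^n (u^k - u^{k-1})$ absolutely convergent. Since $C([0,\delta); W^{1,p}(\R) \cap W^{1,\infty}(\R))$ is a Banach space, the partial sums converge to a limit $u \in C([0,\delta); W^{1,p}(\R) \cap W^{1,\infty}(\R))$, with $\|u\|_{L^\infty_t(W^{1,p}\cap W^{1,\infty})} \leq 2\|u_0\|_{W^{1,p}\cap W^{1,\infty}}$ by Proposition \ref{bound}.

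Next I would pass to the limit in the integral formulation
\[
u^n(x,t) = [k_t * u_0](x) + \int_0^t [\nabla k_{t-s} * \mathbf{J}(\nabla u^{n-1})](x)\, ds.
\]
The linear piece $k_t * u_0$ is independent of $n$. For the nonlinear piece, combining the uniform bound from Proposition \ref{bound}, the differentiability of $\mathbf{J}$, and the Mean Value Theorem (exactly as in the proof of Proposition \ref{convergence}) yields
\[
\|\mathbf{J}(\nabla u^{n-1}(s)) - \mathbf{J}(\nabla u(s))\|_{L^p} \leq C\,\|\nabla u^{n-1}(s) - \nabla u(s)\|_{L^p},
\]
and the right-hand side tends to zero uniformly in $s \in [0,\delta)$. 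Young's inequality together with \eqref{cee} then yields
\[
\left\| \int_0^t \nabla k_{t-s} * \bigl[\mathbf{J}(\nabla u^{n-1}) - \mathbf{J}(\nabla u)\bigr]\, ds \right\|_{W^{1,p}} \lesssim \bigl(\delta^{1/2} + \delta^{3/4}\bigr)\, \|u^{n-1} - u\|_{L^\infty_t W^{1,p}_x},
\]
where I use Lemma \ref{beta} to absorb the integrable singularity in $t-s$. Letting $n \to \infty$ identifies $u$ as a solution of \eqref{integraleq}, and the $L^\infty$ component of the norm is handled by the same computation with $p = \infty$.

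The only delicate point I anticipate is ensuring that the passage to the limit inside the time integral is legitimate, but the integrability of the kernel singularity together with the uniform-in-$s$ convergence $\|\nabla u^{n-1}(s) - \nabla u(s)\|_{L^p} \to 0$ makes a dominated convergence argument immediate. No new analytical machinery beyond what was already developed for Propositions \ref{bound} and \ref{convergence} is needed, so the corollary amounts to recognizing that a Cauchy sequence of contractions in a Banach space converges to a fixed point of the integral operator.
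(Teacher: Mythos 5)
Your proposal is correct and is exactly the argument the paper tacitly relies on: the corollary is stated without proof, as an immediate consequence of Propositions \ref{bound} and \ref{convergence} (geometric differences make the iterates Cauchy in the Banach space $C([0,\delta);W^{1,p}\cap W^{1,\infty})$, and one passes to the limit in the integral equation using the Lipschitz bound on $\mathbf{J}$ and the kernel estimates). Your write-up simply supplies the standard details the authors omit, so there is nothing to add.
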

The next step is to show uniqueness.
\begin{prop}[Uniqueness]\label{uniqua}
The strong $W^{1,p} \cap W^{1,\infty}$ solution to equation \eqref{MBE} is unique.
\end{prop}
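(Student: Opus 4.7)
The plan is to imitate the contraction-type argument from Proposition \ref{convergence}, applied to the difference of two candidate solutions. Let $u, v \in C([0,\delta); W^{1,p} \cap W^{1,\infty}(\R))$ be two strong solutions of the integral equation \eqref{integraleq} sharing the same initial datum $u_{0}$. Because both lie in this space, their gradients are uniformly bounded by some $M > 0$ on $[0,\delta)$; this is precisely what will let us exploit the local Lipschitz property of $\mathbf{J}$ guaranteed by its differentiability.

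Subtracting the two integral equations cancels the linear piece, leaving
$$u(t) - v(t) = \int_{0}^{t} \nabla k_{t-s} * \bigl( \mathbf{J}(\nabla u(s)) - \mathbf{J}(\nabla v(s)) \bigr)\, ds.$$
Taking $W^{1,p}$ norms and applying Young's inequality (Lemma \ref{young}) followed by the Mean Value Theorem bound — exactly as in the proof of Proposition \ref{convergence} — yields
$$\| u(t) - v(t) \|_{W^{1,p}} \leq C \int_{0}^{t} \| \nabla k_{t-s} \|_{W^{1,1}} \| \nabla u(s) - \nabla v(s) \|_{L^{p}}\, ds.$$
Inserting the kernel estimate \eqref{cee} then produces the weakly-singular inequality
$$\| u(t) - v(t) \|_{W^{1,p}} \leq C \int_{0}^{t} \bigl( (t-s)^{-\frac{1}{2}} + (t-s)^{-\frac{1}{4}} \bigr) \| u(s) - v(s) \|_{W^{1,p}}\, ds,$$
and the analogous computation with $p$ replaced by $\infty$ (as noted at the end of the proof of Proposition \ref{convergence}) yields the same estimate in $W^{1,\infty}$.

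Summing these two inequalities and taking the supremum of $w(t) := \| u(t) - v(t) \|_{W^{1,p} \cap W^{1,\infty}}$ over $t \in [0,\tau]$, for any $\tau \leq \delta$, gives
$$\sup_{0 \leq t \leq \tau} w(t) \leq C \bigl( \tau^{\frac{1}{2}} + \tau^{\frac{3}{4}} \bigr) \sup_{0 \leq t \leq \tau} w(t).$$
Choosing $\tau$ small enough that $C(\tau^{1/2} + \tau^{3/4}) < 1$ forces $w \equiv 0$ on $[0,\tau]$, so $u \equiv v$ there. A standard continuation argument, re-centering the estimate at the later initial time, then propagates uniqueness across the entire interval of existence.

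The only mild subtlety is the weak singularity in the convolution kernel $(t-s)^{-1/2}$, which prevents a direct invocation of the classical Gronwall inequality. One could instead apply Proposition \ref{Bihari} with $\omega(u) = u$ and $k = 0$ after absorbing the $t$-dependence of the kernel into an upper bound, but the short-interval bootstrap sketched above is more elementary and sidesteps the issue altogether, so that is the route I would take.
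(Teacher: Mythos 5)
Your derivation of the difference estimate is exactly the paper's: subtract the two copies of \eqref{integraleq}, use Young's inequality, the Mean Value Theorem for the differentiable $\mathbf{J}$ (legitimate because the gradients are bounded on compact subintervals of $[0,\delta)$), and the kernel bound \eqref{cee}, arriving at the same weakly singular Gronwall-type inequality for $\| u(t)-v(t)\|_{W^{1,p}\cap W^{1,\infty}}$. Where you diverge is in how you close it: the paper invokes Proposition \ref{Bihari} with $k=0$ and $\omega(u)=u$ to conclude $\|u-v\|\leq 0$ outright, whereas you take the supremum over a short interval $[0,\tau]$, absorb the factor $C(\tau^{1/2}+\tau^{3/4})<1$, and then propagate by continuation. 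Both are correct in substance, but your route is arguably cleaner on two points that the paper glosses over: with $k=0$ the quantity $\Omega(k)$ in Proposition \ref{Bihari} is only defined in a limiting sense (the proposition itself warns that $\Omega(k)$ must be well-defined), and the kernel $h(s)=(t-s)^{-1/2}+(t-s)^{-1/4}$ depends on $t$, so the hypotheses of Proposition \ref{Bihari} are not literally met without an extra reduction. The price you pay is the continuation step, which silently uses the re-centered Duhamel formula $u(t)=k_{t-t_0}*u(t_0)+\int_{t_0}^{t}\nabla k_{t-s}*\mathbf{J}(\nabla u(s))\,ds$; this is standard (semigroup property of $k_t$ plus Fubini) and the absorption constant is uniform on compact subintervals, so finitely many steps cover the interval of existence, but it would be worth a sentence. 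Also state the uniform gradient bound on compact subintervals $[0,\tau]\subset[0,\delta)$ rather than on all of $[0,\delta)$, since continuity on a half-open interval does not by itself give boundedness; this costs nothing because the argument is local in time anyway.
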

\begin{proof}
Suppose $u$ and $v$ are solutions to equation \eqref{integraleq} with initial data $u_{0}$.  By proceeding as we did above, we may use equation \eqref{integraleq} and the properties of $\mathbf{J}$ and $k_{t}$ to obtain the estimate
\begin{align*}
& \| u(t) - v(t) \|_{W^{1,p} \cap W^{1,\infty}} \\
& \qquad \qquad \quad \leq \int_{0}^{t} \left[(t-s)^{-\frac{1}{2}} + (t-s)^{-\frac{1}{4}} \right] \| u(s) - v(s) \|_{W^{1,p} \cap W^{1,\infty}}\ ds.
\end{align*}
Applying Proposition \ref{Bihari}, we obtain that
\[
\| u - v \|_{L^{\infty}_{t} (W^{1,p} \cap W^{1,\infty})} \leq 0.
\]
Uniqueness follows.
\end{proof}

To complete the proof of Proposition \ref{local}, we must also show that the solution map is continuous.
\begin{prop}\label{continuous}
The solution map $u_{0} \mapsto u$ is continuous.
\end{prop}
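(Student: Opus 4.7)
The plan is to mimic the uniqueness argument from Proposition \ref{uniqua}, but keeping track of how the initial difference propagates in time. Specifically, let $u_{0}, v_{0} \in B$ with corresponding strong solutions $u, v$ produced by the iteration scheme. Subtracting the two integral equations \eqref{integraleq} gives
\[
u(t) - v(t) = k_{t} * (u_{0} - v_{0}) + \int_{0}^{t} \nabla k_{t-s} * \bigl( \mathbf{J}(\nabla u) - \mathbf{J}(\nabla v) \bigr)\ ds.
\]
Taking $W^{1,p} \cap W^{1,\infty}$ norms, using $\|k_{t}\|_{L^{1}} = 1$ together with Young's inequality (Lemma \ref{young}), and invoking Lemma \ref{kernel} to bound $\|\nabla k_{t-s}\|_{W^{1,1}}$ by $(t-s)^{-1/2} + (t-s)^{-1/4}$ as in \eqref{cee}, we arrive at a linear inhomogeneous Volterra-type inequality with the inhomogeneity $\|u_{0} - v_{0}\|_{W^{1,p} \cap W^{1,\infty}}$ on the right-hand side.

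Next, I would estimate the nonlinear difference. Because Proposition \ref{bound} guarantees that both $u$ and $v$ stay in a fixed ball of $W^{1,p} \cap W^{1,\infty}$ on $I = [0,\delta)$, and $\mathbf{J}$ is differentiable, the Mean Value Theorem yields
\[
\| \mathbf{J}(\nabla u(s)) - \mathbf{J}(\nabla v(s)) \|_{L^{p} \cap L^{\infty}} \leq C \| u(s) - v(s) \|_{W^{1,p} \cap W^{1,\infty}},
\]
with $C$ depending only on the radius $R$ of $B$ and on $\|u_{0}^{*}\|_{W^{1,p} \cap W^{1,\infty}}$, exactly as was used in the proof of Proposition \ref{convergence}. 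Substituting this back produces
\[
\| u(t) - v(t) \|_{W^{1,p} \cap W^{1,\infty}} \leq \| u_{0} - v_{0} \|_{W^{1,p} \cap W^{1,\infty}} + C \int_{0}^{t} \bigl[ (t-s)^{-\tfrac{1}{2}} + (t-s)^{-\tfrac{1}{4}} \bigr] \| u(s) - v(s) \|_{W^{1,p} \cap W^{1,\infty}}\ ds.
\]

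Finally, I would apply Proposition \ref{Bihari} with $\omega(u) = u$, $k = \|u_{0} - v_{0}\|_{W^{1,p} \cap W^{1,\infty}}$, and $h(s) = (t-s)^{-1/2} + (t-s)^{-1/4}$, which is integrable on $[0,\delta)$ by Lemma \ref{beta}. In this linear case $\Omega$ is a logarithm and $\Omega^{-1}$ an exponential, so the Bihari bound reduces to
\[
\| u - v \|_{L^{\infty}_{t}(W^{1,p} \cap W^{1,\infty})} \leq C_{\delta}\, \| u_{0} - v_{0} \|_{W^{1,p} \cap W^{1,\infty}},
\]
which gives Lipschitz (hence continuous) dependence of the solution map on $B$. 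The only subtle point, and the step I would be most careful with, is verifying that the Mean Value constant really is uniform over the ball $B$: this relies on the uniform bound of Proposition \ref{bound} and the fact that $\mathbf{J}'$ is bounded on bounded sets. Everything else is a direct replay of the estimates already used in the existence and uniqueness proofs.
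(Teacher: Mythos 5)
Your setup coincides with the paper's: subtract the two copies of \eqref{integraleq}, bound the linear term by $\|u_{0}-v_{0}\|_{W^{1,p}\cap W^{1,\infty}}$ via Young's inequality and \eqref{kest1}, and control the Duhamel term through the Mean Value Theorem for $\mathbf{J}$ with a constant that is uniform thanks to the bound of Proposition \ref{bound} (your worry about uniformity is legitimate but resolves exactly as you say). Where you diverge is the closing step. The paper does not invoke any Gronwall-type lemma here: it takes the supremum over $t\in[0,\delta)$ on both sides, which turns the singular integral into the factor $C(\delta^{3/4}+\delta^{1/2})$, and then absorbs that term into the left-hand side under the smallness condition $C(\delta^{3/4}+\delta^{1/2})<1$, yielding the Lipschitz bound with constant $\bigl(1-C(\delta^{3/4}+\delta^{1/2})\bigr)^{-1}$ as in \eqref{welldef}.

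The one genuine wrinkle in your version is the appeal to Proposition \ref{Bihari}: in your integral inequality the kernel $(t-s)^{-1/2}+(t-s)^{-1/4}$ depends on $t$ as well as $s$, whereas the proposition is stated for a fixed integrable $h(s)$, so the log/exponential computation you describe is not a literal application of it. What you actually need is a Gronwall inequality with a weakly singular kernel (or an iteration of the inequality to remove the singularity); alternatively, the paper's sup-and-absorb step settles the matter with no Gronwall argument at all, at the harmless price of re-using the smallness of $\delta$ already present in the local construction. To be fair, the paper commits the same loose application of Proposition \ref{Bihari} in its uniqueness proof (Proposition \ref{uniqua}), so your step is no worse than what appears there; but if you want your continuity proof to be airtight as written, either cite a singular Gronwall inequality or replace the final step by the absorption argument.
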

\begin{proof}
It suffices to show that there exists a constant $C > 0$ such that
\[
\| u - v \|_{L^{\infty}_{t}(W^{1,p} \cap W^{1,\infty})_{x}} \leq C \| u_{0} - v_{0} \|_{W^{1,p}}
\]
for solutions $u, v$ of equation \eqref{integraleq} corresponding to initial data $u_{0}, v_{0}$ respectively.  Since $u$ and $v$ satisfy equation \eqref{integraleq}, we have
\begin{align*}
\| u - v \|_{W^{1,p}} & \leq \| k_{t} * (u_{0} - v_{0}) \|_{W^{1,p}} \\
& \quad + \left\| \int_{0}^{t} \nabla k_{t - s} * ( \mathbf{J}(\nabla u) - \mathbf{J}(\nabla v))\ ds \right\|_{W^{1,p}}.
\end{align*}
Proceeding as above, we have
\[
\| k_{t} * (u_{0} - v_{0}) \|_{W^{1,p}} \leq \| u_{0} - v_{0} \|_{W^{1,p}}
\]
and
\[
\left\| \int_{0}^{t} \nabla k_{t - s} * ( \mathbf{J}(\nabla u) - \mathbf{J}(\nabla v))\ ds \right\|_{W^{1,p}} \leq C(t^{3/4} + t^{1/2})\| u - v \|_{L^{\infty}_{t}W^{1,p}_{x}}.
\]
We may thus conclude that
\[
\| u - v \|_{L^{\infty}_{t}W^{1,p}_{x}} \leq \| u_{0} - v_{0} \|_{W^{1,p}} + C(\delta^{3/4} + \delta^{1/2})\| u - v \|_{L^{\infty}_{t} W^{1,p}_{x}},
\]
which we rewrite as
\begin{equation}\label{welldef}
\| u - v \|_{L^{\infty}_{t}W^{1,p}_{x}} \leq \frac{1}{1 - C(\delta^{3/4} + \delta^{1/2})} \| u_{0} - v_{0} \|_{W^{1,p}}.
\end{equation}
Of course, this requires that
\[
C(\delta^{3/4} + \delta^{1/2}) < 1
\]
so that the right-hand side of equation \eqref{welldef} is well-defined.  In this case, we may conclude that the solution map $u_{0} \mapsto u$ is continuous.
\end{proof}
\noindent Proposition \ref{local} follows immediately from Corollary \ref{existence} and Propositions \ref{uniqua} and \ref{continuous}.

\section{Gradient Bounds on Solutions}\label{gradboundsec}

In this section, we will prove Theorem \ref{mainthm2}.  To begin, define
\[
M = \sup_{t \in [0,T]} \left\{ \| \nabla u(t) \|_{L^{p}_{x}} + t^{\frac{d}{4p}} \| \nabla u(t) \|_{L^{\infty}_{x}} \right\}.
\]
We will show that if the gradient of $u_{0}$ is sufficiently small, we will have $$M < \infty,$$ from which the desired result will follow.

Observe that
\[
\| \nabla u(t) \|_{L^{\infty}_{x}} = \| u(t) \|_{\dot{W}^{1,\infty}_{x}},
\]
so that
\begin{align*}
\| \nabla u \|_{L^{\infty}_{x}} & = \| k_{t} * u_{0} \|_{\dot{W}^{1,\infty}_{x}} + \int_{0}^{t} \| \nabla k_{t-s} * \mathbf{J}(\nabla u(s)) \|_{\dot{W}^{1,\infty}_{x}}\ ds \\
& \leq \| k_{t} \|_{L^{p'}} \| \nabla u_{0} \|_{L^{p}} + \int_{0}^{t} \| D^{2} k_{t-s} \|_{L^{1}} \| \nabla u(s) \|_{L^{\infty}_{x}}^{q}\ ds.
\end{align*}
Here, $p'$ is the conjugate exponent to $p$:
\[
\frac{1}{p} + \frac{1}{p'} = 1
\]
Thus, the above inequaility simplifies to
\begin{align*}
\| \nabla u \|_{L^{\infty}_{x}} & \leq C t^{-\frac{d}{4p}} \| \nabla u_{0} \|_{L^{p}} + C \int_{0}^{t} (t-s)^{-\frac{1}{2}} \| \nabla u(s) \|_{L^{\infty}_{x}}^{q}\ ds.
\end{align*} 
Multiplying by $t^{\frac{d}{4p}}$ yields
\begin{align*}
t^{\frac{d}{4p}}\| \nabla u \|_{L^{\infty}_{x}} & \leq C \| \nabla u_{0} \|_{L^{p}} + C t^{\frac{d}{4p}}\int_{0}^{t} (t-s)^{-\frac{1}{2}} \| \nabla u(s) \|_{L^{\infty}_{x}}^{q}\ ds \\
& \leq C \| \nabla u_{0} \|_{L^{p}} \\
& \quad + C t^{\frac{d}{4p}} \left(\int_{0}^{t} (t-s)^{-\frac{1}{2}} s^{-\frac{dq}{4p}} \ ds\right) \left(\sup_{s \in [0,T]} s^{\frac{d}{4p}}\| \nabla u(s) \|_{L^{\infty}_{x}}\right)^{q}.
\end{align*}
Applying Lemma \ref{beta}, this reduces to
\begin{align*}
t^{\frac{d}{4p}}\| \nabla u \|_{L^{\infty}_{x}} & \leq C \| \nabla u_{0} \|_{L^{p}} + C t^{\frac{1}{2}-\frac{d(q-1)}{4p}} \left(\sup_{s \in [0,T]} s^{\frac{d}{4p}}\| \nabla u(s) \|_{L^{\infty}_{x}}\right)^{q} \\
& \leq C \| \nabla u_{0} \|_{L^{p}} + C t^{\frac{1}{2} - \frac{d(q-1)}{4p}} M^{q}.
\end{align*}
Observe that the right-hand side of this inequality will be independent of $t$ if
\[
q = 1 + \frac{2p}{d}.
\]
This then becomes
\begin{equation}\label{gradest1}
t^{\frac{d}{4p}} \| \nabla u(t) \|_{L^{\infty}_{x}} \leq C \| \nabla u_{0} \|_{L^{p}} + C M^{1 + \frac{2p}{d}}.
\end{equation}

By a similar computation, it is easy to show that
\[
\| \nabla u(t) \|_{L^{p}_{x}} \leq \| \nabla u_{0} \|_{L^{p}} + C \int_{0}^{t} (t-s)^{-\frac{1}{2}} \| |\nabla u(s)|^{q} \|_{L^{p}_{x}}\ ds.
\]
From the definition of $q$, we have
\[
\| |\nabla u(s)|^{q} \|_{L^{p}_{x}} \leq \| \nabla u (s) \|_{L^{\infty}_{x}}^{\frac{2p}{d}} \| \nabla u(s) \|_{L^{p}_{x}}.
\]
Thus, we obtain the estimate
\[
\| \nabla u(t) \|_{L^{p}_{x}} \leq \| \nabla u_{0} \|_{L^{p}} + C \int_{0}^{t} (t-s)^{-\frac{1}{2}} \| \nabla u (s) \|_{L^{\infty}_{x}}^{\frac{2p}{d}} \| \nabla u(s) \|_{L^{p}_{x}}\ ds.
\]
Note that the integral in the right-hand side can be estimated by
\begin{align*}
& \int_{0}^{t} (t-s)^{-\frac{1}{2}} \| \nabla u (s) \|_{L^{\infty}_{x}}^{\frac{2p}{d}} \| \nabla u(s) \|_{L^{p}_{x}}\ ds \\
& \qquad \qquad  = \int_{0}^{t} (t-s)^{-\frac{1}{2}} s^{-\frac{1}{2}}\left( s^{\frac{d}{4p}}\| \nabla u (s) \|_{L^{\infty}_{x}}\right)^{\frac{2p}{d}} \| \nabla u(s) \|_{L^{p}_{x}}\ ds \\
& \qquad \qquad \leq \left(\int_{0}^{t} (t-s)^{-\frac{1}{2}} s^{-\frac{1}{2}}\ ds\right) M^{1+\frac{2p}{d}}.
\end{align*}
Applying Lemma \ref{beta}, we thus obtain the estimate
\begin{equation}\label{gradest2}
\| \nabla u(t) \|_{L^{p}} \leq \| \nabla u_{0} \|_{L^{p}} + C M^{1+\frac{2p}{d}}.
\end{equation}

Combining equations \eqref{gradest1} and \eqref{gradest2}, we have
\[
\| \nabla u(t) \|_{L^{p}} + t^{\frac{d}{4p}} \| \nabla u(t) \|_{L^{\infty}} \leq C_{1} \| \nabla u_{0} \|_{L^{p}} + C_{2} M^{1 + \frac{2p}{d}}.
\]
Taking the supremum over $t \in [0,T]$ gives us
\[
M \leq C_{1} \| \nabla u_{0} \|_{L^{p}} + C_{2} M^{1 + \frac{2p}{d}}.
\]
It then follows from Lemma \ref{strauss} that
\[
M < C
\]
for some constant $C > 0$ whenever $\| \nabla u_{0} \|_{L^{p}}$ is sufficiently small.  An immediate consequence of this is that
\begin{equation}\label{decay}
\| \nabla u(t) \|_{L^{\infty}_{x}} \leq \frac{C}{t^{\frac{d}{4p}}} \quad \text{and} \quad \| \nabla u(t) \|_{L^{p}_{x}} \leq C
\end{equation}
for $t > 0$.  This completes the proof of Theorem \ref{mainthm2}.

\begin{cor}\label{interpolate}
Assume that $u$ is a solution to equation \eqref{MBE} for $q > 1$, and let $p < p_{\theta} < \infty$ be such that
\[
\frac{1}{p_{\theta}} = \frac{1-\theta}{p}.
\]
Then
\[
\| \nabla u(t) \|_{L^{p_{\theta}}} \leq C t^{-\frac{d\theta}{4p}}.
\]
\end{cor}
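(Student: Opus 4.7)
The plan is to derive the stated $L^{p_\theta}$ bound by interpolating between the two endpoint estimates already provided by Theorem \ref{mainthm2}, namely $\|\nabla u(t)\|_{L^p} \leq C_2$ and $\|\nabla u(t)\|_{L^\infty} \leq C_1 t^{-d/(4p)}$. The relation $\frac{1}{p_\theta} = \frac{1-\theta}{p}$ is precisely the Hölder interpolation relation between $L^p$ and $L^\infty$ with weight $\theta$, so this is the natural thing to do.

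Concretely, I would write
\[
\int |\nabla u(t)|^{p_\theta}\,dx = \int |\nabla u(t)|^{p} \cdot |\nabla u(t)|^{p_\theta - p}\,dx,
\]
and observe that $p_\theta - p = p\theta/(1-\theta) = p_\theta \theta$. Pulling the second factor out in $L^\infty$ gives
\[
\int |\nabla u(t)|^{p_\theta}\,dx \leq \|\nabla u(t)\|_{L^\infty}^{p_\theta \theta}\, \|\nabla u(t)\|_{L^p}^{p},
\]
and taking the $p_\theta$-th root produces the interpolation inequality
\[
\|\nabla u(t)\|_{L^{p_\theta}} \leq \|\nabla u(t)\|_{L^p}^{1-\theta}\,\|\nabla u(t)\|_{L^\infty}^{\theta}.
\]

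Substituting the two bounds from Theorem \ref{mainthm2} then yields
\[
\|\nabla u(t)\|_{L^{p_\theta}} \leq C_{2}^{1-\theta}\,C_{1}^{\theta}\, t^{-\frac{d\theta}{4p}} = C\, t^{-\frac{d\theta}{4p}},
\]
which is exactly the claimed estimate. I do not anticipate any genuine obstacle here; the proof is essentially bookkeeping of exponents, and the only small point to verify is that the Hölder interpolation exponent matches the hypothesis $\frac{1}{p_\theta} = \frac{1-\theta}{p}$, which it does by construction. The content of the corollary really lies in Theorem \ref{mainthm2}, which has already been established.
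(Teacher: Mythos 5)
Your proposal is correct and is exactly the paper's argument: the paper's proof consists of the single remark that the corollary follows by interpolating the two estimates of Theorem \ref{mainthm2} (equation \eqref{decay}), and you have simply written out that Hölder interpolation with the correct exponent bookkeeping, $\|\nabla u(t)\|_{L^{p_\theta}} \leq \|\nabla u(t)\|_{L^{p}}^{1-\theta}\|\nabla u(t)\|_{L^{\infty}}^{\theta}$. The only point worth noting is that, as in the paper, the conclusion tacitly uses the smallness assumption on $\|\nabla u_{0}\|_{L^{p}}$ under which Theorem \ref{mainthm2} holds.
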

\begin{proof}
This result can be obtained by interpolating the two estimates in equation \eqref{decay}, which hold under the smallness assumption on $\nabla u_{0}$.
\end{proof}

\section{Global Well-posedness}\label{globalwell}

In this section, we show that the local solutions constructed in  section \ref{localwell} exist globally.  Let $u$ be such a solution, and suppose that $T^{*}$ is the maximal time of existence.  If $T^{*} = +\infty$, there is nothing to prove, so suppose that $T^{*} < +\infty$.  By a standard argument, it must be the case that
\[
\sup_{t \in [0,T^{*}]} \| u(t) \|_{W^{1,p} \cap W^{1,\infty}} = +\infty.
\]
We will show that this is not the case.

By the local theory in the previous section, it is a simple matter to see that $u = 0$ is the unique strong $W^{1,p} \cap W^{1,\infty}$ solution to the Cauchy problem \eqref{Cauchy} with $u_{0} = 0$.  Thus, we may assume that $u_{0} \neq 0$.  In light of Theorem \ref{mainthm2}, we need only estimate the growth of $\| u(t) \|_{L^{p}}$ and $\| u(t) \|_{L^{\infty}}$.

By proceeding as we did to obtain equation \eqref{welldef}, it is easy to see that $u$ satisfies
\[
\| u(t) \|_{L^{p}} \leq \| u_{0} \|_{L^{p}} + C \int_{0}^{t} (t-s)^{-\frac{1}{4}} \| \nabla u(s)  \|_{L^{pq}}^{q}\ ds.
\]
To this, we apply Corollary \ref{interpolate} with $p_{\theta} = pq > p$ to obtain
\[
\| \nabla u(s) \|_{L^{pq}} \leq Ct^{-\frac{d}{4p}\left( 1 - \frac{1}{q} \right)}.
\]
Thus, we obtain that
\[
\| u(t) \|_{L^{p}} \leq \| u_{0} \|_{L^{p}} + C \int_{0}^{t} (t-s)^{-\frac{1}{4}} s^{-\frac{d}{4p}(q-1)}\ ds.
\]
Since we assume that
\[
q = 1 + \frac{2p}{d},
\]
then this estimate simplifies to
\[
\| u(t) \|_{L^{p}} \leq \| u_{0} \|_{L^{p}} + C \int_{0}^{t} (t-s)^{-\frac{1}{4}} s^{-\frac{1}{2}}\ ds.
\]
Applying Lemma \ref{beta}, we obtain
\begin{equation}\label{roughest1}
\| u(t) \|_{L^{p}} \leq \| u_{0} \|_{W^{1, p}} + C t^{\frac{1}{4}},
\end{equation}
which we may express as
\begin{equation}\label{lpest}
\| u(t) \|_{L^{p}} \leq C(1+t)^{\frac{1}{4}}.
\end{equation}

Now, we turn to estimating the $L^{\infty}$ norm of $u$.  Proceeding as we have before, we have the estimate
\begin{align*}
\| u(t) \|_{L^{\infty}_{x}} & \leq \| u_{0} \|_{L^{\infty}} + C \int_{0}^{t} \| \nabla k_{t-s} \|_{L^{p'}} \| |\nabla u|^{q} \|_{L^{p}} \\
& \leq \| u_{0} \|_{W^{1,\infty}} + C \int_{0}^{t} (t-s)^{-\frac{d}{4p}-\frac{1}{4}} \| \nabla u(s) \|_{L^{pq}}^{q}\ ds \\
& \leq  \| u_{0} \|_{W^{1,\infty}} + C \int_{0}^{t} (t-s)^{-\frac{d}{4p}-\frac{1}{4}} s^{-\frac{1}{2}}\ ds \\
& \leq  \| u_{0} \|_{W^{1,\infty}} + C t^{\frac{3}{4}-\frac{d}{4p}}.
\end{align*}
Of course, this requires that
\[
p > \frac{d}{3},
\]
which is satisfied when
\[
q > \frac{5}{3}.
\]
This completes the proof of Theorem \ref{mainthm1}.

\section{Coarseness}\label{rough}

In this section, we prove Theorem \ref{mainthm3}.  Begin by applying Lemma \ref{gagl} to obtain the estimate
\[
\| u(t) \|_{L^{2}} \leq C \| \nabla u(t) \|_{L^{p}}^{1-\theta} \| \nabla u(t) \|_{L^{p}}^{\theta}
\]
for
\begin{equation}\label{choosetheta}
\theta = d\left( \frac{1}{p} - \frac{1}{2} \right).
\end{equation}
For this to be valid, we require
\[
0 < d\left( \frac{1}{p} - \frac{1}{2} \right) < 1
\]
and
\[
1 < p < 2,
\]
with $p$ as in Theorem \ref{mainthm1}.  This is satisfied when
\[
\max \left\{\frac{6+d}{2+d}, 1 + \frac{2}{d} \right\} < q < 1 + \frac{4}{d}.
\]
In this case, we apply inequality \eqref{lpest} and Theorem \ref{mainthm2} to see that
\[
\| u(t) \|_{L^{2}} \leq C (1+t)^{\frac{1-\theta}{4}}.
\]
Substituting $\theta$ from equation \eqref{choosetheta}, we obtain
\[
\| u(t) \|_{L^{2}} \leq C (1+t)^{\frac{1}{4} - \frac{d}{4p} + \frac{d}{8}},
\]
as desired.  This completes the proof of Theorem \ref{mainthm3}.


\begin{thebibliography}{99}

\bibitem{B1956} I. Bihari, \emph{A generalization of a lemma of Bellman and its application to uniqueness problems of differential equations}, Acta Math. Acad. Sci. Hungar. {\bf 7} (1956), 81--94.  MR 79154

\bibitem{JOHGSSO1994} M. D. Johnson, C. Orme, A. W. Hunt, D. Graff, J. Sudijono, L. M. Sander, and B. G. Orr, \emph{Stable and unstable growth in molecular beam epitaxy}, Phys. Rev. Lett. {\bf 72} (1994), no. 1, 116--119.

\bibitem{T2006} T. Tao, \emph{Nonlinear dispersive equations}, CBMS Regional Conference Series in Mathematics, American Mathematical Society, Providence, RI 2006.

\bibitem{KP2000} J. Krug and P. Politi, \emph{Island nucleation in the presence of step-edge barriers: Theory and applications}, Physical Review B - Condensed Matter and Materials Physics {\bf 61} (2000), no. 20, 14037--14046.

\bibitem{LWZ2007} S. Liu, F. Wang, and H. Zhao, \emph{Global existence and asymptotics of solutions to the Cahn-Hilliard equation}, J. Differential Equations {\bf 238} (2007), no. 2, 426--469.  MR 2341432

\bibitem{RK1997} M. Rost and J. Krug, \emph{Coarsening of surface structures in unstable epitaxial growth}, Physical Review E - Statistical Physics, Plasmas, Fluids, and Related Interdisciplinary Topics {\bf 55} (1997), no. 4, 3952--3957.

\bibitem{R1988} H. L. Royden, \emph{Real analysis}, third ed., Macmillan Publishing Company, New York, 1988. MR 1013117

\bibitem{S1997} M. Siegert \emph{Ordering dynamics of surfaces in molecular beam epitaxy}, Physica A: Statistical Mechanics and its Applications {\bf 239} (1997), no. 1-3, 420--427.

\bibitem{S1998} M. Siegert, \emph{Coarsening dynamics of crystalline thin films}, Phys. Rev. Lett. {\bf 81} (1998), no. 25, 5481--5484.

\bibitem{SPSZS1995} J. A. Stroscio, D. T. Pierce, M. D. Stiles, A. Zangwill, and L. M. Sander, \emph{Coarsening of unstable surface features during fe(001) homoepitaxy}, Phys. Rev. Lett. {\bf 75} (1995), 4246--4249.

\bibitem{Z2000} H. Zhao, \emph{Decay estimates for the solutions of some multidimensional nonlinear evolution equations}, Comm. Partial Differential Equations {\bf 25} (2000), no. 3-4, 377--422. MR 1748362

\bibitem{SP1994} M. Siegert and M. Plischke, \emph{Slope selection and coarsening in molecular beam epitaxy}, Phys. Rev. Lett. {\bf 73} (1994), no. 11, 1517--1520.

\bibitem{LQT2017} D. Li, Z. Qiao, T. Tang, \emph{Gradient bounds for a thin film epitaxy equation}, J. Differential Equations {\bf 262} (2017) no. 3, 1720--1746.

\bibitem{S1968} W. Strauss, \emph{Decay and Asymptotics for} $\Box u = F(u)$, J. Funct. Anal. {\bf 2} (1968) no. 4, 409-457.

\bibitem{B2020} G. Bogn\'ar, \emph{Roughening in nonlinear surface growth model}, Appl. Sci. {\bf 10} (2020) no. 4, article 1422.

\bibitem{V1991} J. Villain, \emph{Continuum models of crystal growth from atomic beams with and without desorption}, J. Phys. I {\bf 1} (1991) no. 1, 19--42.

\bibitem{BMW2001} D. Bl\"omker, S. Maier-Paape, T. Wanner, \emph{Roughness in surface growth equations}, Interfaces and Free Boundaries {\bf 3} (2001) no. 465--484.

\end{thebibliography}
\end{document}